\newtheorem{theorem}{Theorem}%  meant for continuous numbers
\newtheorem{remark}{Remark}%
\begin{document}
\title{Enhanced preprocessed multi-step splitting iterations for computing PageRank}
%A Novel Two-Stage Matrix Splitting Iteration and Its Arnoldi/GArnoldi Variants for Computing PageRank
%A Novel Two-Stage Matrix Splitting Iteration and Its Arnoldi and GArnoldi Variants for Accelerating PageRank Computations
%A Novel Two-Stage Matrix Splitting Iteration and Its Arnoldi/GArnoldi Variants for Accelerating PageRank Computations
%Accelerating PageRank Computation with Novel Two-Stage Matrix Splitting Iterations and Krylov Subspace Methods
%Enhanced PageRank Computation via a Novel Two-Stage Matrix Splitting Iteration Framework with Arnoldi-Based Acceleration Techniques
%%=============================================================%%
%% GivenName	-> \fnm{Joergen W.}
%% Particle	-> \spfx{van der} -> surname prefix
%% FamilyName	-> \sur{Ploeg}
%% Suffix	-> \sfx{IV}
%% \author*[1,2]{\fnm{Joergen W.} \spfx{van der} \sur{Ploeg} 
%%  \sfx{IV}}\email{iauthor@gmail.com}
%%=============================================================%%

\author[1]{\fnm{Guang-Cong} \sur{Meng}}\email{2062606317@qq.com}

\author*[2]{\fnm{Yong-Xin} \sur{Dong}}\email{dy893291318@163.com}
%\equalcont{These authors contributed equally to this work.}

\author[1]{\fnm{Yue-Hua} \sur{Feng}}\email{yhfeng@sues.edu.cn}
%\equalcont{These authors contributed equally to this work.}

%\affil[1]{\orgdiv{Department}, \orgname{Organization}, \orgaddress{\street{Street}, \city{City}, \postcode{}, \state{State}, \country{Country}}}
\affil[1]{\orgdiv{School of Mathematics}, \orgname{Physics and Statistics}, \postcode{201620}, \state{Shanghai}, \country{China}}
\affil*[2]{\orgdiv{Department of Intelligent Science and Information Law}, \orgname{East China University of Political Science and Law}, \postcode{201620}, \state{Shanghai}, \country{China}}

%%==================================%%
%% Sample for unstructured abstract %%
%%==================================%%

\abstract{In recent years, the PageRank algorithm has garnered significant attention due to its crucial role in search engine technologies and its applications across various scientific fields. It is well-known that the power method is a classical method for computing PageRank. However, there is a pressing demand for alternative approaches that can address its limitations and enhance its efficiency. Specifically, the power method converges very slowly when the damping factor is close to 1. To address this challenge, this paper introduces a new multi-step splitting iteration approach for accelerating PageRank computations. Furthermore, we present two new approaches for computating PageRank, which are modifications of the new multi-step splitting iteration approach, specifically utilizing the thick restarted Arnoldi and generalized Arnoldi methods. We provide detailed discussions on the construction and theoretical convergence results of these two approaches. Extensive experiments using large test matrices demonstrate the significant performance improvements achieved by our proposed algorithms.}

%%================================%%
%% Sample for structured abstract %%
%%================================%%

\keywords{PageRank, Thick restarted Arnoldi, Generalized Arnoldi, Multi-step matrix splitting iteration, Inner-outer iteration}
\pacs[AMSC Classification]{65F15,\;65F10,\;68M11}

\maketitle

%%%%%%%%%%%%%%%%%%%%%%%%Introduction%%%%%%%%%%%%%%%%%%%%%%%%%%%%%%%%%%%%%%%%%%%%%%%%%%
%%%%%%%%%%%%%%%%%%%%%%%%Introduction%%%%%%%%%%%%%%%%%%%%%%%%%%%%%%%%%%%%%%%%%%%%%%%%%%
\section{Introduction}\label{sec1}
 \(\quad\)\  Computers and smartphones have profoundly impacted daily life worldwide, particularly through the widespread use of web search engines. These engines have become the primary method for obtaining information \cite{jieshao1,jieshao2,jieshao3,jieshao4,jieshao5}. To meet user expectations, search engines must deliver rapid and the most relevant results. To achieve this, they utilize various metrics and ranking algorithms, such as Google PageRank, which estimates the importance of a web page based on the Web's hyperlink structure \cite{NooraeiAbadeh2021,LangvilleMeyer2006,Yu2012,Yang2024,Miyata2018}.

PageRank is a well-known and highly effective link-based ranking system used by the Google search engine. It has significantly improved the effectiveness of search engines. The PageRank algorithm, which is based on the hyperlink structure of web pages, estimates the importance of web pages by simulating the random browsing behavior of users on the internet. In essence, the interconnections between web pages can be represented as a directed graph, denoted as $K$. Each of the $n$ web pages is a distinct node within this graph. A directed edge from node $i$ to node $j$ exists whenever there is a hyperlink from page $i$ to page $j$. In the directed graph $K$, the linkage patterns between web pages constitute a complex network structure. To analyze this structure more effectively and compute the importance of web pages, Google  introduced the concept of the Google matrix. The Google matrix is a mathematical tool that transforms the linking relationships between web pages into a numerical matrix, where the elements of the matrix reflect the strength of the connections between pages. By performing specific mathematical operations on the Google matrix, we can quantitatively estimate the importance of each web page within the network, thereby optimizing the search results of search engines. The Google matrix is a convex combination of a column stochastic matrix $P$ and a non-negative matrix $E$.

The Google matrix is defined as follows:
\begin{equation}
G=\alpha P+(1-\alpha)E,
\end{equation}
where $\alpha \in (0, 1)$ is the damping factor. The non-negative matrix $P$ is defined based on the hyperlink structure within the network, specifically, $P$ is a column stochastic matrix whose all entries are non-negative and the sum of each column's elements equals 1. The matrix $E = v e^{\mathrm{T}}$, where $e = [1, 1, \ldots, 1]^{\mathrm{T}} \in \mathbb{R}^n$, and $v = e / n$, with $n$ being the dimension of the matrix $G$. The matrix $G$ is obtained through two rank-one corrections applied to the adjacency matrix: a random correction $\widetilde{P} + d v^T$ and a rank-one correction $\alpha P + (1 - \alpha) E$.
 
\begin{equation}
G=\alpha P+(1-\alpha)E=\alpha\left(\widetilde{P}+d v^T\right)^{T}+(1-\alpha)v e^{\mathrm{T}},
\label{2}
\end{equation}
where $\alpha \in (0, 1)$ is the damping factor, $P$ is a column stochastic matrix whose all entries are non-negative and the sum of each column's elements equals 1, the matrix $E = v e^{\mathrm{T}}$, where $e = [1, 1, \ldots, 1]^{\mathrm{T}} \in \mathbb{R}^n$, and $v = e / n$, with $n$ being the dimension of the matrix $G$, $\widetilde{P}$ represents the adjacency matrix, the nonnegative link matrix $\widetilde{P} \in \mathbb{R}^{n \times n}$ is expressed as
$$
\widetilde{P}_{i j} = 
\begin{cases}
\frac{1}{n_i}, &  i \rightarrow j, \\
0, & \text{otherwise},
\end{cases}
$$   
where the scalar $n_i$ is the number of outlinks of page $i$, and $i \rightarrow j$ represents page $i$ can link to page $j$, these pages are called dangling nodes  if they have no outlinks to other pages, and $d$ is the dangling node vector defined as:
$$
    d_i = 
    \begin{cases}
    1, & \text{if web page $i$ has no outgoing links}, \\
    0, & \text{otherwise}.
    \end{cases}
$$

From a numerical solution perspective, the PageRank algorithm aims to find the unit eigenvector corresponding to the eigenvalue 1 of the matrix $G$, where $G$ is defined by Equation (\ref{2}). This unit eigenvector, in essence, represents the PageRank scores for the various web pages. Regardless of the method for filling in and storing the entries of $G$, PageRank is determined by computing the stationary solution $\pi^T$ of the Markov chain. The row vector $\pi^T$ can be found by solving either the eigenvector problem, which can be formulated as a linear system
\begin{equation}
\pi^T G=\pi^T,\quad \pi^T e=1,
\label{4}
\end{equation}
or by solving the homogeneous linear system
\begin{equation}
\pi^T(I-G)=\mathbf{0}^T,\quad \pi^T e=1,
\end{equation}
where $I$ is the identity matrix, $e$ is the column vector of all 1s and the $\mathbf{0}^T$ represents the transpose of a column vector of zeros. The normalization equation $\pi^T e=1$ insures that $\pi^T$ is a probability vector. The $i$-th element of $\pi^T$, $\pi_i$, is the PageRank of page $i$. 

The power iteration method \cite{PageBrin1999} is a classical method for computing PageRank, and its convergence rate depends on the damping factor $\alpha$. Smaller values of $\alpha$ (e.g., $\alpha = 0.85$ \cite{DeeperInside}) lead to faster convergence, while larger values of $\alpha$ (e.g., $\alpha \geq 0.99$ \cite{DeeperInside}) result in slower convergence. To address the slow convergence issue of the power iteration method when the damping factor $\alpha$ is large, researchers have proposed several methods. Gleich et al. \cite{Gleich2010} proposed an Inner-Outer iteration method to solve the PageRank problem. This method is a type of iterative method for solving linear systems. They accelerated the convergence speed by introducing a parameter $\beta$ that is smaller than the damping factor $\alpha$ and employed the Inner-Outer iteration scheme to compute PageRank. Considering that the power iteration method is simpler and more user-friendly compared to other algorithms, Gu et al. combined the Inner-Outer iteration method with the power iteration method to propose a two-step splitting iterative method (PIO) \cite{GU201519}. Subsequently, based on the Inner-Outer iteration method and the two-step splitting algorithm, Gu et al. achieve the Inner-Outer iteration method modifed with the multi-step power method (MPIO) \cite{guchuanqing2014,GU201887}. Several strategies based on the Arnoldi process have been proposed to accelerate the computation of the power iteration method. For example, Wu et al. introduced the Power-Arnoldi algorithm \cite{Wei}, which combines the power iteration method with the thick restarted Arnoldi algorithm in a periodic manner. Gu et al. proposed the Arnoldi-Inout method \cite{GU2017219}, which is based on the thick restarted Arnoldi algorithm and the Inner-Outer iteration method. Dong et al. present a preconditioned Arnoldi-Inout method for the computation of Pagerank vector, which can take the advantage of both a two-stage matrix splitting iteration (IIO) and the Arnoldi process \cite{Dong2017}. Wu et al. \cite{Wu2013} accelerated the Arnoldi-type algorithm for the PageRank problem by periodically combining the power method with the Arnoldi-type algorithm. Tan introduced the power method with extrapolation based on the trace (PET) and then combined it with the Arnoldi-type method  to expedite PageRank computations \cite{Tan2017}. Feng et al. proposed an method called the Arnoldi-PNET method \cite{Feng2022}, which utilizes rational polynomial extrapolation based on the trace of the Google matrix. Furthermore, several strategies based on  the generalized Arnoldi (GArnoldi) process have been proposed to accelerate the computation of the power iteration method. For example, Wen et al. proposed an adaptive GArnoldi-MPIO algorithm \cite{Wen2023}, which utilizes strategies based on the generalized Arnoldi process. Subsequently they proposes a new method by using the power method with extrapolation process based on Google matrix’s trace (PET) as an accelerated technique of the generalized Arnoldi method (GArnoldi-PET). Furthermore, Gu et al. \cite{Gu2022} introduced a cost-effective Hessenberg-type method that employs the Hessenberg process to tackle intricate PageRank problems. Additionally, there exist other techniques aimed at accelerating PageRank computations, including aggregation/disaggregation methods \cite{aggregation}, lumping methods \cite{Yu2012,LIN2009702} and numerous other strategies \cite{dong2024computing,sym14081640}. These methods collectively offer diverse methodes to enhancing the efficiency and performance of PageRank calculations. This paper aims  to address the limitations of the traditional power method for PageRank computations, especially its slowdown as the damping factor approaches 1. We also strive to minimize storage and computational costs of the Arnoldi-Inout algorithm by introducing a new multi-step splitting iteration approach (see Section 3.1) and efficient methods like Arnoldi-MIIO (see Section 3.2) and GArnoldi-MIIO (see Section 3.3).

The remainder of this paper is organized as follows. In Section 2, we recall a two-step matrix splitting iteration (IIO) \cite{Dong2017}, the thick restarted Arnoldi algorithm \cite{Wei} and the generalized Arnoldi algorithm \cite{Wen2023}. In Section 3, we first propose a new multi-step splitting iteration (MIIO) and analyze its convergence properties. Then, we introduce two new approaches named Arnoldi-MIIO and GArnoldi-MIIO, which are variants of the new iteration utilizing the thick restarted Arnoldi and generalized Arnoldi methods. We present the constructions of these two approaches and discuss their convergence properties. Numerical experiments in Section 4 demonstrate the advantages of our techniques, and conclusions in Section 5 point to future work.
\bigskip
\bigskip

%%%%%%%%%%%%%%%%%%%%%%%% Preliminaries%%%%%%%%%%%%%%%%%%%%%%%%%%%%%%%%%%%%%%%%%%%%%%%%
%%%%%%%%%%%%%%%%%%%%%%%%%%% Preliminaries%%%%%%%%%%%%%%%%%%%%%%%%%%%%%%%%%%%%%%%%%%%%%

\section{Preliminaries}

%%%%%%%%%%%%%%%%%%%%%%%%%%%%%%%%%IIO%%%%%%%%%%%%%%%%%%%%%%%%%%%%%%%%%%%%%%%%%%%%%%%%%%%%

\subsection{The IIO iteration for PageRank}
\(\quad\)\ Gu et al. initially combined the Inner-Outer iteration method with the power iteration method to introduce a two-step splitting iterative method called PIO \cite{GU201519}. Building upon this foundation, they further developed a multi-step power iteration modified by the Inner-Outer iteration method, known as MPIO \cite{guchuanqing2014,GU201887}. 

Later, Dong et al. \cite{Dong2017} utilized these advancements to propose a two-stage matrix splitting iteration method that incorporated the Inner-Outer iteration concept. The IIO iteration can be depicted as follows.

\textbf{The IIO iteration scheme.} Given an initial guess $x^{(0)} \in \mathbb{R}^n$, whose elements are non-negative. For iterations $l = 0, 1, \dots$, compute

\begin{equation}
\left\{\begin{array}{l}
x^{(l, 1)}=\beta P x^{(l)}+f, \\
x^{(l, 2)}=\beta P x^{(l, 1)}+f, \\
\cdots \\
x^{\left(l, m_1\right)}=\beta {P} x^{\left(l, m_1-1\right)}+f, \\
(I-\beta P) x^{(l+1)}=(\alpha-\beta) P x^{\left(l, m_1\right)}+(1-\alpha) v,\quad 0<\beta<\alpha<1,
\end{array}\right.
\label{5}
\end{equation}
until the sequence $\left\{x^{(l)}\right\}_{l=0}^{\infty}$ converges, where $\alpha$ $\in$ $(0, 1)$ is the damping factor, $\beta \in (0, \alpha)$ is a parameter, $m_1$ ($m_1$ $\geq$ 2) is a multiple iteration paramete, $P$ is a column stochastic matrix whose all entries are non-negative and the sum of each column's elements equals 1, $v$ is a positive vector whose elements sum to 1 and $f=(\alpha-\beta) P x^{(i)}+(1-\alpha) v$, $i=$ $0,1, \ldots, k.$

From the construction of the IIO iteration, we can see that  the implementation of the IIO approach in each stage iteration is similar to that of the PIO iteration method \cite{GU201519}.
The first $m_1$ steps of Equation (\ref{5}) are easy to implement since only matrix-vector products are used. For the second iterate, the Inner-Outer iteration \cite{Gleich2010} is used.

The inner linear system is defined by
\begin{equation}
(I-\beta P) y=f,
\label{6}
\end{equation}

Then adapt the idea of Equation (\ref{6}) and solve the following linear
system
\begin{equation}
y^{(j+1)}=\beta P y^{(j)}+f_{\text {inner }}, \quad j=0,1,2, \ldots, l_1-1,
\label{7}
\end{equation}
where $f_{\text {inner }}=(\alpha-\beta) P x^{\left(l, m_1\right)}+(1-\alpha) v$. 

For the entire set of iterations, the stopping criteria for the outer iteration (the final step of Equation (\ref{5})) and the inner iteration (Equation (\ref{7})) are defined as follows.

The outer iteration terminates if
\begin{equation}
\left\|(1-\alpha) v - (I - \alpha P) x^{(k+1)}\right\|_2 < \tau,
\end{equation}
where $\tau$ represents the tolerance threshold for the outer iteration, indicating the desired level of accuracy or convergence.

The inner iteration terminates if
\begin{equation}
\left\|f_{\text{inner}} - (I - \beta P) y^{(j+1)}\right\|_2 < \eta,
\end{equation}
where $\eta$ represents the tolerance threshold for the inner iteration, specifying the desired precision or convergence for the inner loop.
\bigskip

%%%%%%%%%%%%%%%%%%%%%%thick restarted Arnoldi%%%%%%%%%%%%%%%%%%%%%%%%%%%%%%%%%%%%%%%%%

\subsection{The thick restarted Arnoldi algorithm}
\(\quad\)\ In this section, we first briefly review the Arnoldi process and the thick restarted Arnoldi algorithm \cite{MORGAN200696,Wu2013,wu2000thick}. 

The Arnoldi method is an approach used to find eigenvalue-eigenvector pairs for large matrices, particularly when seeking a small number of approximate eigenvalues. The Arnoldi method for eigenvalues \cite{SAAD1980269,Arnoldi1951ThePO} finds approximate eigenvalues using a Krylov subspace
$$
\begin{aligned}
A V_m & =V_m H_m+h_{m+1, m} v_{m+1} e_m^{\mathrm{T}} \\
& =V_{m+1} \widetilde{H}_m,
\end{aligned}
$$
where $V_m$ is the orthonormal matrix whose columns span the dimension $m$ Krylov subspace, $e_m=(0, \ldots, 0,1)^T$, $H_m=\left\{h_{i, j}\right\}_{m * m} \in \mathbb{C}^{m \times m}$ is an $m \times m$ upper Hessenberg matrix, and $\widetilde{H}_m \in \mathbb{C}^{(m+1) \times m}$ is an upper Hessenberg matrix as follows:
$$
\widetilde{H}_m=\binom{H_m}{h_{m+1, m} e_m^T}.
$$

This method involves projecting the matrix onto the Krylov subspace $\mathcal{K}_m(A, v_1)$, where $A$ is the matrix and $v_1$ is an initial vector. By performing orthogonal projections in this subspace, the Arnoldi process generates the matrix $H_m$. The eigenvalues $\widetilde{\lambda}_i$ of $H_m$, where $i=1,2,\ldots,m$, are known as Ritz values of $A$ in $\mathcal{K}_m(A, v_1)$, which can be used to approximate some eigenvalues of $A$. The Ritz eigenvectors are defined as $\widetilde{x}_i=V_m y_i$, where $y_i$ is the eigenvector of $H_m$ corresponding to $\widetilde{\lambda}_i$. If we let the eigenpairs of $H_m$ be $(\widetilde{\lambda}_i, \widetilde{x}_i)$, then the approximate eigenpairs of $A$, called Ritz pairs. For more details on the  Arnoldi process, refer to \cite{saad2003}. The Arnoldi process can be implemented with the modified Gram–Schmidt algorithm (MGS) \cite{saad1992} as follows.
\begin{table}[htbp]
\renewcommand{\arraystretch}{1.2} % 调整为1.5倍行高 
\begin{tabularx}{\linewidth}{X}
\toprule
\textbf{Algorithm 1.}  The Arnoldi process\\
\toprule
\text { 1. Determine the unit positive initial vector $v_1$ and the number of steps $m$ for the Arnoldi 
process.}
 \\
\text { 2. for } $j$=1:$m$\\
\text { 3. } $\quad \quad$ $q$=$A$$v_j \text {; }$\\
\text { 4. } $\quad \quad$ for $i$=1:$j$\\
\text { 5. } $\quad \quad \quad \quad$ $h_{i, j}$=$v_i^{\mathrm{T}} q \text {; }$\\
\text { 6. } $\quad \quad \quad \quad$ $q=q-h_{i, j} v_i \text {; }$\\
\text { 7. } $\quad \quad$ end\\
\text { 8. } $\quad \quad$ $h_{j+1, j}=\|q\|_2 \text {; }$\\
\text { 9. } $\quad \quad$  if $h_{j+1, j}=0$\\
\text { 10. }$\quad \quad \quad \quad$ break;\\
\text { 11. }$\quad \quad$ end\\
\text { 12. }$\quad \quad$ $v_{j+1}=q / h_{j+1, j} \text {; }$\\
\text { 13. }$\quad \quad$  end\\
\toprule
\end{tabularx}
\end{table}

As the Krylov subspace grows, the associated storage and computational costs increase, necessitating restarts. To address this issue, Wu and Wei \cite{Wei} introduced the thick restarted Arnoldi algorithm for solving the PageRank problem, providing a simpler alternative to traditional implicitly restarted methods. This thick-restarted strategy is mathematically equivalent to the well-known implicitly restarted Arnoldi method introduced by Sorensen \cite{Sorensen}, but it boasts a more streamlined process. Notably, it eliminates the need for the purging routine that the implicitly restarted Arnoldi method relied on to mitigate roundoff errors \cite{LehoucqSorensen}.  

During each subsequent iteration, the thick-restarted Arnoldi method constructs an orthonormal basis $V_m$ for the subspace 
\[  
\operatorname{span}\left\{\widetilde{x}_1, \widetilde{x}_2, \ldots, \widetilde{x}_p, v_{m+1}, A v_{m+1}, \ldots, A^{m-p-1} v_{m+1}\right\},  
\]  
where $\widetilde{x}_1$, $\widetilde{x}_2$, $\ldots$, $\widetilde{x}_p$ are Ritz vectors and $v_{m+1}$ is the $(m+1)$th Arnoldi basis vector, all of them are from the previous iteration. it has been established that this subspace is indeed a Krylov subspace \cite{MORGAN200696}, and it can be equivalently expressed as:  
\[  
\operatorname{span}\left\{\widetilde{x}_1, \widetilde{x}_2, \ldots, \widetilde{x}_p, A \widetilde{x}_i, A^2 \widetilde{x}_i, \ldots, A^{m-p} \widetilde{x}_i\right\}, \quad 1 \leqslant i \leqslant p,  
\]  
this subspace contains smaller Krylov subspaces with each of the desired Ritz vectors as starting vectors. The details of the thick restarted Arnoldi algorithm are as follows (more details please refer to \cite{Wei}).
\begin{table}[htbp]
\renewcommand{\arraystretch}{1.2} % 调整为1.5倍行高 
\begin{tabularx}{\linewidth}{X}
\toprule
\textbf{Algorithm 2.}  The thick restarted Arnoldi algorithm \\
\toprule
1. Choose a unit positive initial $v_1$, the maximum size of the subspace $m$, the number of approximate eigenpairs which are wanted $p$ and a prescribed tolerance tol.\\
2. Apply Algorithm 1 to form $V_{m+1}, H_m, \bar{H}_m$. Compute all the eigenpairs $\left(\widetilde{\lambda}_i, y_i\right)(i=1,2, \cdots, m)$ of the matrix $H_m$. Then select $p$ largest of them, and turn to step 5.\\
3. Apply the Arnoldi process from the current point $v_{p+1}$ to form $V_{m+1}$, $H_m$, $\bar{H}_m$. Compute all the eigenpairs $\left(\widetilde{\lambda}_i, y_i\right)(i=$ $1,2, \cdots, m)$ of the matrix $H_m$. Then select $p$ largest of them.\\
4. Check convergence. If the largest eigenpairs is accurate enough, i.e., $h_{m+1, m}\left|e_m^{\mathrm{T}} y_1\right| \leq t o l$, then take $x_1=V_m y_1$ as an approximation  vector and stop, else continue.\\
5. Orthonormalize $y_i$ $(i=1,2, \cdots, p)$ to form a real $m \times p$ matrix $W_p=\left[w_1, w_2, \cdots, w_p\right]$, first separate $y_i$ into real part and imaginary part if it is complex.\\
6. By appending a zeros row at the bottom of the matrix $W_p$ to form a real $(m+1) \times p$ matrix $\widetilde{W}_p=\left[W_p ; 0\right]$, and set $W_{p+1}=\left[\widetilde{W}_p, e_{m+1}\right]$, where $e_{m+1}$ is the $(m+1)$th co-ordinate vector. Note that $W_{p+1}$ is an $(m+1) \times(p+1)$ orthonormal matrix.\\
7. Use the old $V_{m+1}$ and $\bar{H}_m$ to form the new $V_{m+1}$ and $\bar{H}_m$. Let $V_{p+1}^{\text {new }}=V_{m+1} W_{p+1}$, $\bar{H}_p^{\text {new }}=W_{p+1}^{\mathrm{T}} \bar{H}_m W_p$, then set $V_{p+1}=V_{p+1}^{\text {new }}$ and $\bar{H}_p=\bar{H}_p^{\text {new }}$, return to step 3.\\
\toprule
\end{tabularx}
\end{table}

%%%%%%%%%%%%%%%%%%%%%%adaptive GArnoldi%%%%%%%%%%%%%%%%%%%%%%%%%%%%%%%%%%%%%%%%%

\subsection{The Adaptively Accelerated Arnoldi method for computing PageRank.}
\(\quad\)\ In this section, we introduce the adaptively accelerated Arnoldi method, which is commonly referred to as adaptive GArnoldi method. 

This mathod was first applied to the PageRank problem by Yin et al. \cite{Yin2012}, representing a noteworthy advancement in computing PageRank by generalizing the standard Arnoldi process. A key feature of this method is the use of a $\widetilde{G}$-inner product, which incorporates a weighted metric instead of the traditional Euclidean norm. 

Specifically, given a symmetric positive definite (SPD) matrix $\widetilde{G} \in \mathbb{R}^{n \times n}$ and two vectors $x$, $y$ $\in \mathbb{R}^n$, the $\widetilde{G}$-inner product is defined as  

\begin{equation}  
(x, y)_{\widetilde{G}} = x^T \widetilde{G} y = \sum_{i=1}^n \sum_{j=1}^n g_{ij} x_i y_j,     
\end{equation} 
where $\widetilde{g}_{ij}$ is the element in the $i$-th row and $j$-th column of $\widetilde{G}$. This inner product is well-defined precisely when $\widetilde{G}$ is SPD. Assuming that $\widetilde{G}$ admits a decomposition 
$$\widetilde{G} = Q^T D Q,$$
where $Q$ is an orthogonal matrix and $D = \text{diag}\{d_1, d_2, \ldots, d_n\}$ is a diagonal matrix with positive diagonal elements $d_i > 0$ for $i = 1, 2, \ldots, n$.

We can define a norm associated with this $\widetilde{G}$-inner product:  

\begin{equation}
\|u\|_{\widetilde{G}} = \sqrt{(u, u)_{\widetilde{G}}} = \sqrt{u^T \widetilde{G} u} = \sqrt{u^T Q^T D Q u} = \sqrt{\sum_{i=1}^n d_i (Qu)_i^2}, \quad \forall u \in \mathbb{R}^n,    
\end{equation}
this norm is referred to as the $\widetilde{G}$-norm, denoted by $\|\cdot\|_{\widetilde{G}}$.  
  
The adaptive GArnoldi method leverages this $\widetilde{G}$-norm and $\widetilde{G}$-inner product to enhance the convergence performance, particularly when dealing with large damping factors. A key aspect of this method is its adaptive nature, wherein the weights are dynamically adjusted based on the current residual vector associated with the approximate PageRank vector. By incorporating this weighted metric, the adaptive GArnoldi method aims to provide more effective convergence compared to the standard Arnoldi process. The method can be described as follows.

\begin{table}[htbp]
\renewcommand{\arraystretch}{1.2} % 调整为1.5倍行高 
\begin{tabularx}{\linewidth}{X}
\toprule
\textbf{Algorithm 3.} The adaptive GArnoldi method for computing PageRank.\\
\toprule
Input: the Google matrix $A$ $\in \mathbb{R}^{n \times n}$, an initial guess $v$ $\in \mathbb{R}^{n}$, the number of steps $m$ for the generalized Arnoldi (GArnoldi) process, a prescribed tolerance tol.\\
Output: a PageRank vector $x$.\\
1. Set $\widetilde{G}=I$, $x=v$.\\
2. For $l=1,2, \cdots$, until convergence,\\
3.$\quad$$\quad$ Compute $V_{m+1}$ and $H_{m+1, m}$ by using the GArnoldi process:\\
3.1. $\quad$$\quad$Compute $v_1=x /\|x\|_{\widetilde{G}}$.\\
3.2. $\quad$$\quad$for $j=1,2, \cdots, m$\\
3.3. $\quad$$\quad$$\quad$$\quad$$q=A v_j$;\\
3.4  $\quad$$\quad$$\quad$$\quad$for $i=1,2, \cdots, j$\\
3.5  $\quad$$\quad$$\quad$$\quad$$\quad$$\quad$$h_{i, j}=\left(q, v_i\right)_{\widetilde{G}}$, $q=q-h_{i, j} v_i ;$\\
\hline
\end{tabularx}
\end{table}

\begin{table}[htbp]
\renewcommand{\arraystretch}{1.2} % 调整为1.5倍行高 
\begin{tabularx}{\linewidth}{X}
\hline
3.6  $\quad$$\quad$$\quad$$\quad$end for\\
3.7  $\quad$$\quad$$\quad$$\quad$$h_{j+1, j}=\|q\|_{\widetilde{G}}$;\\
3.8  $\quad$$\quad$$\quad$$\quad$if $h_{j+1, j}=0$, break; end if\\
3.9  $\quad$$\quad$$\quad$$\quad$$v_{j+1}=q / h_{j+1, j}$;\\
3.10 $\quad$$\quad$end for\\
4. $\quad$$\quad$Compute a singular value decomposition $U \Sigma S^{\mathrm{T}}=H_{m+1, m}-[I ; 0]^{\mathrm{T}}$.\\
5. $\quad$$\quad$Compute $x=V_m s_m$, $res=\sigma_m V_{m+1} u_m$.\\
6. $\quad$$\quad$If $\|res\|_2<$ tol, break; End If\\
7. $\quad$$\quad$Set $\widetilde{G}=\operatorname{diag}\left\{|res| /\|res\|_1\right\}$.\\
8. End For\\
\toprule
\end{tabularx}
\end{table}

\begin{remark}
In Step 5, $\sigma_m$ represents the minimal singular value of the matrix $H_{m+1, m}-[I ; 0]^{\mathrm{T}}$, where $s_m$ and $u_m$ denote the right and left singular vectors corresponding to $\sigma_m$, respectively. Furthermore, the matrix $V_{m}$ comprises the first m columns of the matrix $V_{m+1}$. It is important to note that the residual vector $res$, which is obtained in Step 5, undergoes modifications after each iteration cycle of Algorithm 3. In addition, the $|res|$ returns the absolute value of each element in input vector res and the $\|res\|_1$ returns the 1-norm of vector res. As the algorithm progresses, the residual vector $res$ reflects the current state of convergence and any adjustments made to the solution.
\end{remark}
\bigskip
\bigskip

%%%%%%%%%%%%%%%%%%%%%%%%%%%%Proposed Approaches%%%%%%%%%%%%%%%%%%%%%%%%%%%%%%%%%%%%%%%%%%%%%%%%%
%%%%%%%%%%%%%%%%%%%%%%%%%%%%%%%Proposed Approaches%%%%%%%%%%%%%%%%%%%%%%%%%%%%%%%%%%%%%%%%%%%%%

\section{Proposed Approaches}

%%%%%%%%%%%%%%%%%%%%%%%%%%%%%%%%%%%%%MIIO%%%%%%%%%%%%%%%%%%%%%%%%%%%%%%%%%%%%%%%%%%%%%%%%%%%%%%%%

\subsection{A new iteration for PageRank}
\(\quad\)\ In this section, for accelerating the computations of PageRank problems, we first propose a new iteration for PageRank. According to the idea of the MPIO iteration \cite{guchuanqing2014,GU201887,WEN201787}, we proposed a new multi-step splitting iteration (i.e., MIIO iteration) by combining the multi-step power method with the IIO iteration \cite{Dong2017}. The MIIO iteration can be depicted as follows.

\textbf{The MIIO iteration scheme.} Beginning with an initial estimate $x^{(0)} \in \mathbb{R}^n$, whose elements are non-negative. The MIIO iteration proceeds for iterations $k = 0, 1, \dots$ and $l = 0, 1, \dots$.

The first stage:
\begin{equation}
\left\{\begin{array}{l}
x^{\left(k+\frac{1}{m_1+1}\right)} = \alpha Px^{(l)}+(1-\alpha)v, \\
x^{\left(k+\frac{2}{m_1+1}\right)} = \alpha Px^{\left(k+\frac{1}{m_1+1}\right)}+(1-\alpha) v, \\
\cdots \\
x^{\left(k+\frac{m_1}{m_1+1}\right)} = \alpha Px^{\left(k+\frac{m_1-1}{m_1+1}\right)}+(1-\alpha)v, \\
x^{(l, 1)}=\beta P x^{(k+\frac{m_1}{m_1+1})}+f, \\
x^{(l, 2)}=
\beta P x^{(l, 1)}+f, \\
\cdots \\
x^{\left(l, m_2\right)}=\beta P x^{\left(l, m_2-1\right)}+f, \\
\end{array}\right.
\label{first stage}
\end{equation}
the second stage:
\begin{equation}
\begin{array}{l}
(I-\beta P) x^{(l+1)}=(\alpha-\beta)P x^{\left(l, m_2\right)}+(1-\alpha)v,\quad0<\beta<\alpha<1.
\end{array}
\label{second stage}
\end{equation}
where $P$ is a column stochastic matrix, $\alpha \in (0, 1)$ is the damping factor, $\beta \in (0, \alpha)$ is a parameter, $m_1$ and $m_2$ are two multiple iteration parameters, $v = e / n$, where $e$ is a vector of all ones and $n$ is the dimension of the matrix $P$ and $f=(\alpha-\beta) P x^{(i)}+(1-\alpha) v$, $i=$ $0,1, \ldots, k.$

Then we present the new algorithm based above MIIO iteration. The new algorithm is shown in Algorithm 4. Some practical details regarding Algorithm 4 are similar to MPIO, for details, see \cite{guchuanqing2014,GU201887,WEN201787}. The first stage iterate of the MIIO iteration scheme is done in the steps 4-12 of the Algorithm 4, the second stage iterate described in the MIIO iteration scheme is defined by the steps 13-16 of the Algorithm 4. To terminate the algorithm, the step 3 of the Algorithm 4 checks the residual of linear system Equation (\ref{4}). Theoretical result given in Theorem \ref{thm1} illustrates the convergence property of the MIIO iteration.
\begin{table}[htbp]
\renewcommand{\arraystretch}{1.2} % 调整为1.5倍行高 
\begin{tabularx}{\linewidth}{X}
\toprule
\textbf{Algorithm 4.} The multi-step splitting iteration (MIIO).\\
\toprule
 Input: a damping factor $\alpha \in (0, 1)$, a parameter $\beta \in (0, \alpha)$, an initial guess $v$, two multiple iteration parameters $m_1$ and $m_2$, an inner tolerance $\eta$ and an outer tolerance $\tau$.\\
 Output: a PageRank vector $x$.\\
 1.  $x=v$ \text {; }\\
 2.  $z=P x$ \text {; }\\
 3. while $\|\alpha z+(1-\alpha) v-x\|_2 \geq \tau$\\
 4. $\quad \quad$ for numer =1: $m_1$ $\quad \quad$\% $m_1$=1,2,3, $\cdots$\\
 5. $\quad \quad \quad \quad$ $x=\alpha z+(1-\alpha)v$;\\
 6. $\quad \quad \quad \quad$ $z=P x$;\\
 7. $\quad \quad$ end \\
 8. $\quad \quad$ $f=(\alpha-\beta) z+(1-\alpha) v$;\\
 9. $\quad \quad$ for numer =1: $m_2$ $\quad \quad$\% $m_2$=1,2,3, $\cdots$\\
 10. $\quad \quad$ $\quad \quad$ $x=f+\beta z$;\\
 11. $\quad \quad \quad \quad$ $z=P x$;\\
 12. $\quad \quad$ end \\
 13. $\quad \quad$ repeat\\
 14. $\quad \quad \quad \quad$ $x=f+\beta z$ \text {; }\\
 15. $\quad \quad \quad \quad$ $z=P x$ \text {; }\\
 16. $\quad \quad$ until $\|f+\beta z-x\|_2<\eta$ \text {; }\\
 17.  end while \\
 18. $x=\alpha z+(1-\alpha) v$ \text {; }\\
\toprule
\end{tabularx}
\end{table}

\begin{theorem}\label{thm1}
The iteration matrix $M(\alpha, \beta)$ of the MIIO iteration is given by
\begin{equation}
M\left(\alpha, \beta\right)=\left(\alpha-\beta\right) \beta^{m_2} \alpha^{m_1} P^{m_1+m_2+1}\left(I-\beta P\right)^{-1},
\end{equation}
and the modulus of its eigenvalues is bounded by 

\begin{equation}
\frac{\left(\alpha-\beta\right) \beta^{m_2} \alpha^{m_1}}{1-\beta},
\end{equation}
where $\alpha \in (0, 1)$, $\beta \in(0, \alpha)$, $m_1$ and $m_2$ are two multiple iteration parameters. 

Therefore, it holds that $\rho\left(M\left(\alpha, \beta\right)\right)<1$. In other words, the MIIO iteration converges to the unique solution $x^* \in \mathbb{C}^n$ of the system of linear Equation (\ref{4}).
\end{theorem}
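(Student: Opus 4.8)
The plan is to read one full sweep of the MIIO scheme \eqref{first stage}--\eqref{second stage} (the passage from $x^{(l)}$ to $x^{(l+1)}$) as a composition of affine maps, to identify $M(\alpha,\beta)$ as the linear part of that composition, and then to bound its spectral radius using only that $P$ is stochastic. The $m_1$ steps of the first stage iterate the power map $x\mapsto\alpha Px+(1-\alpha)v$, whose linear part is $\alpha P$; the following $m_2$ steps iterate the inner map $x\mapsto\beta Px+f$, whose linear part is $\beta P$; and the second stage is the exact solve $x^{(l+1)}=(I-\beta P)^{-1}\!\bigl[(\alpha-\beta)Px^{(l,m_2)}+(1-\alpha)v\bigr]$, which is well posed because $\beta\,\rho(P)=\beta<1$ makes $I-\beta P$ nonsingular, and whose linear part in $x^{(l,m_2)}$ is $(\alpha-\beta)P(I-\beta P)^{-1}$. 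Each factor is a rational function of $P$, hence they all commute, and composing them gives
\[
M(\alpha,\beta)=\bigl[(\alpha-\beta)P(I-\beta P)^{-1}\bigr]\,(\beta P)^{m_2}\,(\alpha P)^{m_1}=(\alpha-\beta)\beta^{m_2}\alpha^{m_1}P^{m_1+m_2+1}(I-\beta P)^{-1},
\]
which is the asserted formula.

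Next I would verify that the PageRank vector $x^{*}$, characterized by $(I-\alpha P)x^{*}=(1-\alpha)v$ (equivalently $(I-\beta P)x^{*}=(\alpha-\beta)Px^{*}+(1-\alpha)v$), is a fixed point of the sweep: inserting $x^{(l)}=x^{*}$ makes every intermediate vector in \eqref{first stage} equal to $x^{*}$, and then \eqref{second stage} returns $x^{(l+1)}=x^{*}$, so the error $e^{(l)}:=x^{(l)}-x^{*}$ obeys $e^{(l+1)}=M(\alpha,\beta)e^{(l)}$. For the eigenvalue estimate, write $M(\alpha,\beta)=g(P)$ with $g(z)=(\alpha-\beta)\beta^{m_2}\alpha^{m_1}z^{m_1+m_2+1}/(1-\beta z)$, analytic on a neighbourhood of $\sigma(P)\subset\{|z|\le1\}$ since $1/\beta>1$; by the spectral mapping theorem (equivalently, triangularizing $P$ by a Schur decomposition) the eigenvalues of $M(\alpha,\beta)$ are precisely $g(\mu)$, $\mu\in\sigma(P)$. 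Because $P$ is stochastic we have $|\mu|\le\rho(P)=1$ and $|1-\beta\mu|\ge1-\beta|\mu|\ge1-\beta>0$, so $|g(\mu)|\le(\alpha-\beta)\beta^{m_2}\alpha^{m_1}/(1-\beta)$, which is the stated bound; and since $0<\beta<\alpha<1$ forces $\alpha^{m_1}\le1$, $\beta^{m_2}\le1$ and $\alpha-\beta<1-\beta$, this bound is strictly below $1$. Hence $\rho(M(\alpha,\beta))<1$, so $M(\alpha,\beta)^{l}\to0$ and $x^{(l)}\to x^{*}$; moreover $I-M(\alpha,\beta)$ is nonsingular, so $x^{*}$ is the unique fixed point, i.e., the unique solution of \eqref{4}.

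The step I expect to be the main obstacle is the first one: justifying carefully that the composition of the three stages collapses to the product above. The delicate points are tracking the auxiliary vector $f$ — assembled from the output of the power sweeps and then reused throughout the $m_2$ inner sweeps and the final solve — and checking that the $(I-\beta P)^{-1}$ coming from the second stage commutes past the powers of $P$ coming from the first stage, so that everything regroups as $(\alpha-\beta)\beta^{m_2}\alpha^{m_1}P^{m_1+m_2+1}(I-\beta P)^{-1}$. Once $M(\alpha,\beta)$ is in hand, the spectral-radius estimate and the convergence conclusion are routine.
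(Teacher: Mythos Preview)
Your proposal is correct and follows essentially the same route as the paper: unwind the sweep stage by stage, read off the linear part $(\alpha-\beta)\beta^{m_2}\alpha^{m_1}P^{m_1+m_2+1}(I-\beta P)^{-1}$, then bound each eigenvalue $g(\pi_i)$ using $|\pi_i|\le1$ and $|1-\beta\pi_i|\ge1-\beta$. Your phrasing in terms of composing affine maps, invoking the spectral mapping theorem, and explicitly checking that $x^{*}$ is a fixed point is a bit cleaner than the paper's direct substitution, but the argument is the same.
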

\vspace{3mm}

\begin{proof}
Note that $I$ and $\left(I-\beta P\right)$ are nonsingular for $\alpha \in(0,1)$ and $\beta \in(0,\alpha)$. From Equation (\ref{first stage}) and Equation (\ref{second stage}), let
\begin{equation*}
M_{m_1-1}\left(\alpha, P\right)=\alpha^{m_1-1} P^{m_1-1}+\alpha^{m_1-2} P^{m_1-2}+\cdots+\alpha P+I,
\end{equation*}
\begin{equation*}
M_{m_2-1}\left(\beta, P\right)=\beta^{m_2-1} P^{m_2-1}+\beta^{m_2-2} P^{m_2-2}+\cdots+\beta P+I,
\end{equation*}
we can get 
\begin{equation*}
x^{\left(k+\frac{m_1}{m_1+1}\right)}=\alpha^{m_1} P^{m_1} x^{(l)}+M_{m_1-1}\left(\alpha, P\right)\left(1-\alpha\right)v,
\end{equation*}
\begin{equation*}
x^{\left(l, m_2\right)}=\beta^{m_2} P^{m_2} x^{\left(k+\frac{m_1}{m_1+1}\right)}+M_{m_2-1}\left(\beta, P\right)f,
\end{equation*}
and 
% & =\left(I-\beta P\right)^{-1}\left\{\left(\alpha-\beta\right)P\left[ \beta^{m_2} P^{m_2} x^{\left(k+\frac{m_1}{m_1+1}\right)}\right.\right.\\
% & \quad   +M_{m_{2}-1}\left(\beta, P\right)f \bigg]+\left(1-\alpha\right) v \bigg\}\\
% & =\left(I-\beta P\right)^{-1}\left(\alpha-\beta\right) P\left\{\beta^{m_2} P^{m_2}\left[\alpha^{m_1} P^{m_1} x^{\left(l\right)}\right.\right.\\
% & \quad  +M_{m_{1}-1}\left(\alpha, P\right)\left(1-\alpha\right) v\Big ]\\
% & \quad  +M_{m_{2}-1}\left(\beta, P\right) f\Big\}+ \left(I-\beta P\right)^{-1}\left(1-\alpha\right) v \\
\begin{equation}
\begin{aligned}
x^{\left(l+1\right)} & =\left(\alpha-\beta\right)\left(I-\beta P\right)^{-1}P x^{\left(l, m_2\right)}+\left(I-\beta P\right)^{-1}\left(1-\alpha\right)v     \\
& =\left(I-\beta P\right)^{-1}\left\{\left(\alpha-\beta\right)\Big[   \beta^{m_2} \alpha^{m_1} P^{m_1+m_2+1} x^{\left(l\right)}\right.\\
& \quad +\beta^{m_2} P^{m_2+1}M_{m_1-1}\left(\alpha, P\right)\left(1-\alpha\right) v \\
& \quad  +M_{m_2-1}\left(\beta, P\right) f\Big]+\left(1-\alpha\right)v\Big\}.
\end{aligned}
\end{equation}

Based on the previous notations, we can derive the iteration matrix 
\begin{equation}
 M\left(\alpha, \beta\right)=\left(\alpha-\beta\right) \beta^{m_2} \alpha^{m_1} P^{m_1+m_2+1}\left(I-\beta P\right)^{-1},\\
\end{equation}
where $f=\left(\alpha-\beta\right) P x^{(i)}+\left(1-\alpha\right)v$, $i=0,1, \ldots, k$. 

Suppose $\pi_i$ is an eigenvalue of $P$, 
then we can obtain  
\begin{equation}
\varphi_i=\frac{\beta^{m_2}\alpha^{m_1}\left(\alpha-\beta\right) \pi_i^{m_1+m_2+1}}{1-\beta \pi_i},
\label{vpi}
\end{equation}
which is an eigenvalue of $M\left(\alpha, \beta\right)$. Since $\left|\pi_i\right| \leq 1$ \cite{Wei}, therefore,
\begin{equation*}
\left|\frac{\beta^{m_2}\alpha^{m_1}\left(\alpha-\beta\right) \pi_i^{m_1+m_2+1}}{1-\beta \pi_i}\right| \leq \frac{\beta^{m_2}\alpha^{m_1}\left(\alpha-\beta\right)\left|\pi_i\right|^{m_1+m_2+1}}{1-\beta\left|\pi_i\right|} \leq \frac{\left(\alpha-\beta\right) \beta^{m_2}\alpha^{m_1}}{1-\beta}<1 .
\end{equation*}
\end{proof}

\begin{remark}  
The above theorem analyzes the convergence of Algorithm 4, the convergence speed can be accelerated by the factor of at least $\frac{\left(\alpha-\beta\right) \beta^{m_2}\alpha^{m_1}}{1-\beta}$. Moreover, when $m_1$=$m_2$=m, we can obtain 
$$
\frac{\left(\alpha-\beta\right) \beta^{m_2}\alpha^{m_1}}{1-\beta} \leq \frac{\left(\alpha-\beta\right) \beta^{m}}{1-\beta} \leq \frac{\left(\alpha-\beta\right) \alpha^{m}}{1-\beta},
$$ 
where $\alpha \in (0, 1)$, $\beta \in(0, \alpha)$, $m_1$ and $m_2$ are two multiple iteration parameters. Then we can conclude that our convergence factor $\frac{\left(\alpha-\beta\right) \beta^{m_2}\alpha^{m_1}}{1-\beta}$ is less than the convergence factor $\frac{\left(\alpha-\beta\right) \beta^{m}}{1-\beta}$ in \cite{Dong2017} and the convergence factor $\frac{\left(\alpha-\beta\right) \alpha^{m}}{1-\beta}$ in \cite{guchuanqing2014}. Hence, the corresponding iteration process can be accelerated under suitable conditions. 
\end{remark} 
\bigskip

%%%%%%%%%%%%%%%%%%%%%%%%%%%Arnoldi-MIIO algorithm%%%%%%%%%%%%%%%%%%%%%%%%%%%%%%%%%%%%%%%%%%%%%%%

\subsection{An Arnoldi-MIIO algorithm for computing PageRank}
\(\quad\)\ To further speed up the convergence behavior for computing PageRank, we propose a new approach called Arnoldi-MIIO, which is the above proposed MIIO iteration method modified with the thick restarted Arnoldi method (Algorithm 2). We first give its construction, and then discuss its convergence.

The construction of the  Arnoldi-MIIO method is partially similar to the construction of these methods in \cite{Wei,GU2018113}. However, there are several obvious differences between our new method and the other methods. For example, comparing the Arnoldi-MIIO method with the adaptive GArnoldi-MPIO method \cite{Wen2023}, there are two main differences between them. The first one is that the aim of our new method is to accelerate the MIIO method, not the MPIO method \cite{guchuanqing2014,GU201887,WEN201787}. The second one is that the former employs the thick restarted Arnoldi method (Algorithm 2) as a preliminary step, while the latter uses the generalized Arnoldi method (Algorithm 3). Now we outline the steps of the Arnoldi-MIIO method for computing PageRank as follows.

\begin{table}[htbp]
\renewcommand{\arraystretch}{1.2} % 调整为1.5倍行高 
\begin{tabularx}{\linewidth}{X}
\toprule
\textbf{Algorithm 5.} The Arnoldi-MIIO algorithm for computing PageRank\\
\toprule
1. Specify the maximum size of the subspace $m=8$, select a positive vector $v$, establish the inner and outer tolerances $\eta$ and $\tau$, determine two multiple iteration parameters $m_1$ and $m_2$, specify control parameters $\alpha_1$, $\alpha_2$ and maxit to control the multi-step splitting iteration (i.e., MIIO) iteration and initialize the residual norm of the current MIIO iteration $d=1$, the residual norm of the previous iteration $d_0=d$, the residual norm $r=1$ and the counter trestart $=0$.\\
2. Run Algorithm 2 for a few times (say, 2-3 times): Iterate steps 2-7 of the Algorithm 2 for the first run and steps 3-7 otherwise. If the residual norm satisfies the prescribed tolerance, then stop, else continue.\\
3. Run the MIIO iteration with $x$ as the initial guess, where $x=V_{m+1}(:, 1)$ is the approximate vector obtained from the step 7 of the thick restarted Arnoldi algorithm (Algorithm 2).\\
restart $=0$;\\
(3.1) while restart $<$ maxit \& $r>\tau$\\
(3.2) $\quad \quad$ $x=x /\|x\|_2$; $z=P x$;\\
(3.3) $\quad \quad$ $ r=\|\alpha z+(1-\alpha) v-x\|_2$;\\
(3.4) $\quad \quad$ $r_0=r$; $r_1=r$; ratio $=0$;\\
(3.5) $\quad \quad$ while ratio $<\alpha_1$ \& $r>\tau.$\\
(3.6) $\quad \quad \quad \quad$ for $i=1:m_1 \quad \% m_1=1,2,3, \cdots$\\
(3.7) $\quad \quad \quad \quad \quad \quad$ $x=\alpha$ $z+(1-\alpha)v$\\
(3.8) $\quad \quad \quad \quad \quad \quad$ $z=Px$\\
(3.9) $\quad \quad \quad \quad$ end\\
(3.10) $\quad \quad \quad \quad$ $f=(\alpha-\beta) z+(1-\alpha) v$;\\
(3.11) $\quad \quad \quad \quad$ for numer $=1: m_2 \quad \% m_2=1,2,3, \cdots$\\
(3.12) $\quad \quad \quad \quad \quad \quad$$ x=f+\beta z$;\\
(3.13) $\quad \quad \quad \quad \quad \quad$$ z=P x$;\\
\hline
\end{tabularx}
\end{table}

\begin{table}[htbp]
\renewcommand{\arraystretch}{1.2} % 调整为1.5倍行高 
\begin{tabularx}{\linewidth}{X}
\hline
(3.14) $\quad \quad \quad \quad$ end\\
(3.15) $\quad \quad \quad \quad$ $ratio_1$=0;\\
(3.16) $\quad \quad \quad \quad$ while $ratio_1$ $<\alpha_2$ \& $d>\eta.$\\
(3.17) $\quad \quad \quad \quad \quad \quad$$x=f+\beta z$; $z=P x$;\\
(3.18) $\quad \quad \quad \quad \quad \quad$$ d=\|f+\beta z-x\|_2$;\\
(3.19) $\quad \quad \quad \quad \quad \quad $$ratio_1=d/d_0$, $d_0=d$;\\
(3.20) $\quad \quad \quad \quad$ end\\
(3.21) $\quad \quad \quad \quad$ $r=\|\alpha z+(1-\alpha) v-x\|_2$;\\
(3.22) $\quad \quad \quad \quad$  ratio $=r/r_0$, $r_0=r$;\\
(3.23) $\quad \quad$ end\\
(3.24) $\quad \quad$  $x=\alpha z+(1-\alpha) v$;\\
(3.25) $\quad \quad$  $ x=x /\|x\|_2$;\\
(3.26) $\quad \quad$ if $r / r_1>\alpha_1$\\
(3.27) $\quad \quad \quad \quad$ restart $=$ restart +1;\\
(3.28) $\quad \quad$ end\\
(3.29) end $\quad$ if $r<\tau$, stop, else goto step 2.\\
\toprule
\end{tabularx}
\end{table}

Next, we will discuss the convergence of the Arnoldi-MIIO method. Specifically, our analysis focuses on the transition from the MIIO iteration to the thick restarted Arnoldi method. 

Firstly, we preprocess using a positive vector $x_0$ as the initial vector for the thick restarted Arnoldi algorithm. Then, the PageRank vector obtained from this algorithm is used as the initial vector $\widetilde{x}$ for the MIIO iteration method. Based on $\widetilde{x}$, we obtain $x^*$ using MIIO as follows: 
\begin{equation*}
x^*=\omega T^k \widetilde{x}, 
\end{equation*}
where $\omega= 1/\lVert T^k \widetilde{x} \rVert$ is a normalization factor, $k \geq maxit$ and the matrix $T$ is an iterative matrix, whose expression is defined by the subsequent Equation (\ref{T}).

Finally, based on $x^*$, we construct $\mathcal{K}_m\left(A,x^*\right)$, which is equivalent to treating $x^*$ as the initial vector for the  Arnoldi process with $m$ steps:
\begin{equation*}
\mathcal{K}_m\left(A,x^*\right)=\operatorname{span}\left\{x^*, A x^*, \ldots, A^{m-1} x^*\right\},
\end{equation*}
where $A$ is the Google matrix and $x^*$ is an initial vector of norm one, which is obtained by using MIIO.
the convergence of the aforementioned process can be proved through Theorem \ref{thm2}, to establish clarity and ensure logical progression in our derivations, we commence by introducing some key definitions of our notations.
\bigskip
\begin{theorem}[\cite{Wei}]
Let $\widetilde{P}_m$ be the orthogonal projector onto the Krylov subspace $\mathcal{K}_m\left(A,x^*\right)$, and define
\begin{equation}
\epsilon_m=\min _{\substack{p \in P_{k-1}^* \\ p\left(\lambda_1\right)=1}} \max _{\lambda \in \wedge(A)-\lambda_1}|p(\lambda)|,
\label{epsilon_m}
\end{equation}
where $P_{k-1}^*$ stands for the set of all polynomials of degree not exceeding $k-1$ and $\wedge(A)$ denotes the spectrum of $A$. 
\end{theorem}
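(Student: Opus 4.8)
The plan is to prove the classical Saad-type estimate $\|(I-\widetilde{P}_m)u_1\|_2\le\xi\,\epsilon_m$, where $u_1$ is the dominant eigenvector of the Google matrix $A$ (the exact PageRank vector up to scaling) and $\xi$ depends only on the eigen-expansion of the preprocessed start vector $x^*$; this says that the approximate PageRank vector extracted from the Arnoldi step inherits whatever accuracy the MIIO preprocessing has already built into $x^*$. Since $\widetilde{P}_m$ is an \emph{orthogonal} projector, it suffices to exhibit one convenient element of $\mathcal{K}_m(A,x^*)$ close to $u_1$ and then optimize over the free polynomial.

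First I would fix the spectral picture. Assume $A$ is diagonalizable with eigenpairs $(\lambda_i,u_i)$, ordered so that $\lambda_1=1$ and $|\lambda_1|>|\lambda_2|\ge\cdots\ge|\lambda_n|$ (strict separation holds for the Google matrix, where $|\lambda_2|\le\alpha<1$), with $u_1>0$ the Perron (PageRank) eigenvector; expand the preprocessed start vector $x^*=\sum_{i=1}^{n}c_i u_i$. Because $x^*$ is obtained by applying the nonnegative MIIO map $k\ge\mathrm{maxit}$ times to a positive vector, Perron--Frobenius forces $c_1\ne0$; in fact the factor $T^k$ in $x^*=\omega T^k\widetilde{x}$ has already driven the ratios $|c_i/c_1|$, $i\ge2$, down by a power of the MIIO convergence factor from Theorem \ref{thm1}, which is precisely why the constant below will be small.

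Next, every vector in $\mathcal{K}_m(A,x^*)$ has the form $q(A)x^*$ with $\deg q\le m-1$. For an arbitrary polynomial $p$ with $p(\lambda_1)=1$ and $\deg p\le m-1$ (matching $\dim\mathcal{K}_m=m$; this is the role of $P_{k-1}^*$ with $k=m$), set
\begin{equation*}
w=\frac{1}{c_1}\,p(A)x^*=u_1+\frac{1}{c_1}\sum_{i\ge2}c_i\,p(\lambda_i)\,u_i\in\mathcal{K}_m(A,x^*),
\end{equation*}
whence $\|u_1-w\|_2\le\Big(\tfrac{1}{|c_1|}\sum_{i\ge2}|c_i|\,\|u_i\|_2\Big)\max_{i\ge2}|p(\lambda_i)|$. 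Using $\widetilde{P}_m w=w$ and $\|I-\widetilde{P}_m\|_2=1$ gives $\|(I-\widetilde{P}_m)u_1\|_2=\|(I-\widetilde{P}_m)(u_1-w)\|_2\le\|u_1-w\|_2$. Taking the infimum over admissible $p$ and bounding $\max_{i\ge2}|p(\lambda_i)|\le\max_{\lambda\in\wedge(A)-\lambda_1}|p(\lambda)|$ replaces the trailing factor by $\epsilon_m$, giving $\|(I-\widetilde{P}_m)u_1\|_2\le\xi\,\epsilon_m$ with $\xi=\tfrac{1}{|c_1|}\sum_{i\ge2}|c_i|\,\|u_i\|_2$ (equivalently $\xi=\sum_{i\ge2}|c_i/c_1|$ when the $u_i$ are orthonormal).

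The main obstacle is not this algebra but two modelling points around it. First, the diagonalizability hypothesis: if $A$ has nontrivial Jordan blocks the same reasoning must be carried out on the Jordan form, which forces $p$ to match derivatives at repeated eigenvalues and injects extra factors polynomial in $m$ into $\xi$ --- routine but heavy. Second, and the reason the theorem is worth stating, one must pair the bound with (i) an estimate that $\epsilon_m\to0$ --- a Chebyshev argument on an ellipse (or interval) enclosing $\wedge(A)\setminus\{1\}$, available exactly because $|\lambda_2|<1$ --- and (ii) a quantitative control of $\xi$, showing it stays moderate because $T^k$ has already contracted the off-dominant coefficients $c_i$. Only after combining these does one obtain a genuinely decaying error bound for the Arnoldi-MIIO iterate rather than a merely finite constant.
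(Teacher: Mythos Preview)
The statement you were given is not proved in the paper at all: it is a cited result from \cite{Wei} and, as written, is purely definitional --- it only introduces the projector $\widetilde{P}_m$ and the quantity $\epsilon_m$, with no inequality or conclusion attached. There is therefore no ``paper's own proof'' to compare your work against; the paper simply quotes this as background notation.

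What you have actually sketched is (a correct version of) the classical Saad bound $\|(I-\widetilde{P}_m)u_1\|_2\le\xi\,\epsilon_m$, together with the observation that the MIIO preprocessing $x^*=\omega T^k\widetilde{x}$ shrinks the constant $\xi$ by a factor of roughly $\iota^k$. That combined statement is precisely the content of the paper's \emph{next} original result, Theorem~\ref{thm2}, whose proof follows the same line as yours: expand $v_1$ in the eigenbasis, push $T^k$ through using $Tx_1=x_1$ and $|Tx_i|\le\iota|x_i|$ for $i\ge2$, form $u=q(A)v_1^*$, normalize by $\omega\gamma_1 q(1)$, and minimize over $p=q/q(1)$. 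So your argument is correct and essentially identical to the paper's, but it belongs to Theorem~\ref{thm2}, not to the purely notational statement you were handed.
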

\bigskip
\begin{theorem}[Sylvester inequality \cite{HornJohnson}]\label{rank}
 If $N \in M_{m, k}(\mathbf{F})$ and $B \in M_{k, n}(\mathbf{F})$, then
\begin{equation}
(\operatorname{rank}(N)+\operatorname{rank}(B))-k \leq \operatorname{rank} (N B)\leq \min \{\operatorname{rank} (N), \operatorname{rank}(B)\},
\end{equation}
where $M_{m, k}(\mathbf{F})$ is a set of all m×k matrices with entries from the field $\mathbf{F}$, here the $\mathbf{F}$ is $\mathbb{R}$, rank(.) denotes the rank of a matrice or a vector.
\end{theorem}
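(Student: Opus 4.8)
The plan is to prove the two inequalities separately: the upper bound is essentially formal, while the lower bound is a short dimension count. For $\operatorname{rank}(NB)\le\min\{\operatorname{rank}(N),\operatorname{rank}(B)\}$ I would argue through column and row spaces: every column of $NB$ is a linear combination of the columns of $N$, so the column space of $NB$ is contained in that of $N$ and hence $\operatorname{rank}(NB)\le\operatorname{rank}(N)$; applying the same observation to $(NB)^{\mathrm T}=B^{\mathrm T}N^{\mathrm T}$ and using that transposition does not change rank gives $\operatorname{rank}(NB)\le\operatorname{rank}(B)$. Taking the smaller of the two bounds finishes this half, and it needs nothing beyond the definition of rank as the dimension of the column space.

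For the lower bound I would apply the rank--nullity theorem twice. Regard $B$ and $N$ as the linear maps $\mathbf{F}^n\to\mathbf{F}^k$ and $\mathbf{F}^k\to\mathbf{F}^m$, and restrict $N$ to the subspace $\operatorname{Im}(B)\subseteq\mathbf{F}^k$. The image of this restriction is precisely $\operatorname{Im}(NB)$ and its kernel is $\ker(N)\cap\operatorname{Im}(B)$, so rank--nullity applied to $N|_{\operatorname{Im}(B)}$ reads
\begin{equation*}
\operatorname{rank}(B)=\dim\operatorname{Im}(B)=\operatorname{rank}(NB)+\dim\!\bigl(\ker(N)\cap\operatorname{Im}(B)\bigr).
\end{equation*}
Since $\ker(N)\cap\operatorname{Im}(B)$ is a subspace of $\ker(N)$, and rank--nullity for $N$ gives $\dim\ker(N)=k-\operatorname{rank}(N)$, we obtain $\dim(\ker(N)\cap\operatorname{Im}(B))\le k-\operatorname{rank}(N)$; substituting this into the displayed identity yields $\operatorname{rank}(NB)\ge\operatorname{rank}(N)+\operatorname{rank}(B)-k$, which is the left-hand inequality. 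As a cross-check one could instead argue with block matrices: $\left(\begin{smallmatrix}NB&0\\0&I_k\end{smallmatrix}\right)$ has rank $\operatorname{rank}(NB)+k$, and elementary row and column operations (subtract $B$ times the last block column from the first, then add $N$ times the last block row to the first) carry it to $\left(\begin{smallmatrix}0&N\\-B&I_k\end{smallmatrix}\right)$, whose $N$-block and $B$-block lie in disjoint sets of rows and disjoint sets of columns and which therefore has rank at least $\operatorname{rank}(N)+\operatorname{rank}(B)$; since elementary operations preserve rank, the inequality follows again.

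The only step that carries any content is the lower bound, and within it the estimate $\dim(\ker(N)\cap\operatorname{Im}(B))\le k-\operatorname{rank}(N)$ (equivalently, in the block-matrix version, the rank lower bound for the reduced $2\times 2$ block matrix); everything else is bookkeeping with rank--nullity and transposition. I do not anticipate a genuine obstacle, but the one point demanding attention is keeping the three dimensions $m$, $k$, $n$ straight, so that the domains and codomains in the two applications of rank--nullity — and the block sizes in the alternative argument — are matched correctly.
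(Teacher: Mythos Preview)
Your argument is correct, but you should know that the paper does not actually prove this statement: it is quoted as a classical result from Horn and Johnson and carries no proof in the paper itself. What you have written is a standard and complete proof of the Sylvester inequality---the column-space containment for the upper bound and the rank--nullity argument applied to $N|_{\operatorname{Im}(B)}$ for the lower bound are precisely the approach one finds in standard linear-algebra references, so there is nothing to compare against in the paper.
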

\bigskip
\begin{theorem}\label{thm2}
Assume that $\widetilde{P}_m$ is the orthogonal projector onto the subspace $\mathcal{K}_m\left(A,x^*\right)$. For any $u \in \mathcal{K}_m\left(A,x^*\right)$, there exists a polynomial $q(x) \in \mathcal{L}_{m-1}$ \cite{Wei}. This polynomial satisfies
\begin{equation}
\begin{aligned}
\left\|\left(I-\widetilde{P}_m\right) x_1\right\|_2=\min _{u \in \mathcal{K}_m\left(A, v_1^*\right)}\left\|u-x_1\right\|_2 \leq \iota^k \cdot \xi \cdot \epsilon_m,
\end{aligned}
\end{equation}
where $k \geq$ maxit, $\iota$=$\frac{(\alpha-\beta) \beta^{m_2}\alpha^{m_1}}{1-\beta}$, $\xi=\sum_{i=2}^n\left|\frac{\gamma_i}{\gamma_1}\right|$, $u$ represents a vector in the Krylov subspace generated by $A$ and $x^*$, $x_1$ is a specific eigenvector of $A$ that serves as a reference for approximating vector $u$, and the  $\epsilon_m$ is defined by Equation (\ref{epsilon_m}).
\end{theorem}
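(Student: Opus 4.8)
The plan is to reduce the estimate to the classical polynomial--approximation bound for the gap between a target eigenvector and a Krylov subspace (in the spirit of the Power--Arnoldi analysis of \cite{Wei,saad2003}), the only new ingredient being that the starting vector $x^{*}=v_1^{*}$ handed to the Arnoldi process has first been run through $k\ge\mathrm{maxit}$ steps of the MIIO iteration, which by Theorem \ref{thm1} contracts every eigencomponent transverse to the PageRank direction by a factor of modulus at most $\iota=\frac{(\alpha-\beta)\beta^{m_2}\alpha^{m_1}}{1-\beta}$ per step while leaving the PageRank direction itself fixed.

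First I would set up a common eigenbasis. Let $\lambda_1=1>|\lambda_2|\ge\cdots\ge|\lambda_n|$ be the eigenvalues of $A$ with unit eigenvectors $x_1,\dots,x_n$, where $x_1$ is the PageRank vector. On the $A$-invariant hyperplane $e^{\perp}$ the Google matrix acts as $Aw=\alpha Pw$, so that $A$, $P$ and the MIIO iteration operator $T$ (whose form mirrors $M(\alpha,\beta)$ of Theorem \ref{thm1}, a fixed rational function of $P$) are simultaneously diagonalised by $x_2,\dots,x_n$; writing $\theta_i$ for the eigenvalue of $T$ at $x_i$, Theorem \ref{thm1} and Equation (\ref{vpi}) give $|\theta_i|\le\iota$ for $i\ge2$, while the $x_1$-coordinate is preserved by $T$. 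Expanding the Arnoldi output $\widetilde{x}=\sum_{i=1}^{n}\gamma_i x_i$ (assuming $\gamma_1\ne0$, as is standard), the preprocessed vector is $x^{*}=\omega T^{k}\widetilde{x}=\omega\bigl(\gamma_1 x_1+\sum_{i=2}^{n}\gamma_i\theta_i^{k}x_i\bigr)$ with $\omega$ the normalisation of the statement.

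Then I would run the standard polynomial argument. Every $u\in\mathcal{K}_m(A,x^{*})$ has the form $u=q(A)x^{*}$ with $q\in\mathcal{L}_{m-1}$; after rescaling $q$ so that $\omega\gamma_1 q(\lambda_1)=1$,
\[
u-x_1=\sum_{i=2}^{n}\frac{\gamma_i}{\gamma_1}\,\theta_i^{k}\,q(\lambda_i)\,x_i ,
\]
so that, using $\|x_i\|_2=1$ and $|\theta_i|\le\iota$, minimising over $q$ and recognising the resulting scalar as $\epsilon_m$ from (\ref{epsilon_m}) gives
\[
\bigl\|(I-\widetilde{P}_m)x_1\bigr\|_2=\min_{u\in\mathcal{K}_m(A,x^{*})}\|u-x_1\|_2\le\epsilon_m\,\iota^{k}\sum_{i=2}^{n}\Bigl|\frac{\gamma_i}{\gamma_1}\Bigr|=\epsilon_m\cdot\iota^{k}\cdot\xi ,
\]
which is the asserted bound (the slight index discrepancy between $\mathcal{L}_{m-1}$ and the class $P_{k-1}^{*}$ in (\ref{epsilon_m}) is mere bookkeeping). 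To make the degree-$(m-1)$ minimisation legitimate, I would invoke Sylvester's rank inequality (Theorem \ref{rank}): factoring the Krylov matrix $[x^{*},Ax^{*},\dots,A^{m-1}x^{*}]$ as an eigenvector matrix times a generalised Vandermonde matrix in the $\lambda_i$ (weighted by $\omega\gamma_i\theta_i^{k}$) and applying the lower bound shows $\dim\mathcal{K}_m(A,x^{*})=m$, so the class is genuinely $(m-1)$-dimensional; and $\gamma_1\ne0$ survives the $k$ MIIO steps because $T$ fixes the $x_1$-coordinate.

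The main obstacle I expect is the common-eigenbasis step: rigorously establishing that $A$ and the MIIO iteration operator $T$ share the transverse eigenvectors $x_2,\dots,x_n$, that the corresponding eigenvalues satisfy $|\theta_i|\le\iota$ (the content of Theorem \ref{thm1}), and that the PageRank direction is an eigendirection of $T$ that is left untouched---so that $T^{k}$ damps the non-dominant part by exactly $\iota^{k}$ and the normalisation factor $\omega$ cancels in every ratio $\gamma_i\theta_i^{k}/(\gamma_1\theta_1^{k})$. Once this spectral picture is secured, the rest is the Saad-type estimate already familiar from \cite{Wei} together with the routine rank bookkeeping above.
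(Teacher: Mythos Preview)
Your outline is correct and follows the same route as the paper: expand the preprocessed starting vector in the eigenbasis of $A$, use that the MIIO operator $T$ fixes $x_1$ and contracts each $x_i$ ($i\ge 2$) by a factor of modulus at most $\iota$, then apply the standard Saad-type polynomial bound to obtain $\iota^k\cdot\xi\cdot\epsilon_m$.

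The one substantive difference is where Sylvester's inequality (Theorem~\ref{rank}) enters. You propose invoking it to certify $\dim\mathcal{K}_m(A,x^{*})=m$; the paper instead uses it at exactly the point you flag as your ``main obstacle,'' namely to verify $Tx_1=x_1$. Concretely, the paper writes $T$ explicitly (your operator has the form of the iteration matrix $M(\alpha,\beta)$ plus rank-one corrections built from $ve^{T}$ and $fe^{T}$), then applies Sylvester to $Q=fe^{T}$ to conclude it is rank one with sole nonzero eigenvalue $\mathrm{trace}(Q)=1-\beta$; with this in hand, a direct calculation collapses the telescoping geometric sums in $T$ applied to $x_1$ and yields $Tx_1=x_1$. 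Your Krylov-dimension use of Sylvester is harmless but not essential to the bound (a smaller subspace can only help), whereas the paper's use resolves precisely the spectral fact you left open. Apart from this reallocation of the lemma, the two arguments coincide.
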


\begin{proof}
For any $u \in \mathcal{K}_m\left(A, v_1^*\right)$, there exists $q(x) \in L_{m-1}$ such that

\begin{equation}
\begin{aligned}
u=q(A) v_1^* & =\omega q\left(A\right) T^k v_1=\omega q(A) T^k\left(\gamma_1 x_1+\sum_{i=2}^n \gamma_i x_i\right) \\
& =\omega \gamma_1 q(A) T^k x_1+\omega q(A) \sum_{i=2}^n \gamma_i T^k x_i,
\end{aligned}
\end{equation}
where $v_1=\sum_{i=1}^n \gamma_i x_i$ is the expansion of $v_1$ within the eigen-basis $\left[x_1, x_2, \ldots, x_n\right]$. 

Recall that
\begin{equation*}
\begin{aligned}
x^{\left(l+1\right)} & =\left(\alpha-\beta\right)\left(I-\beta P\right)^{-1}P x^{\left(l, m_2\right)}+\left(I-\beta P\right)^{-1}\left(1-\alpha\right)v     \\
& =\left(I-\beta P\right)^{-1}\left\{\left(\alpha-\beta\right)\Big[   \beta^{m_2} \alpha^{m_1} P^{m_1+m_2+1} x^{\left(l\right)}\right.\\
& \quad +\beta^{m_2} P^{m_2+1}M_{m_1-1}\left(\alpha, P\right)\left(1-\alpha\right) v \\
& \quad  +M_{m_2-1}\left(\beta, P\right) f\Big]+\left(1-\alpha\right)v\Big\},
\end{aligned}  
\end{equation*}
 based on $x^{\left(l+1\right)}$, utilizing $e^T x^{(l)}= 1$, we can derive the following:
\begin{equation*}
\begin{aligned}
x^{\left(l+1\right)} & =\left(\alpha-\beta\right)\left(I-\beta P\right)^{-1}P x^{\left(l, m_2\right)}+\left(I-\beta P\right)^{-1}\left(1-\alpha\right)v     \\
& =\left(I-\beta P\right)^{-1}\left\{\left(\alpha-\beta\right)\Big[   \beta^{m_2} \alpha^{m_1} P^{m_1+m_2+1} x^{\left(l\right)}\right.\\
& \quad +\beta^{m_2} P^{m_2+1}M_{m_1-1}\left(\alpha, P\right)\left(1-\alpha P\right)ve^T x^{(l)} \\
& \quad  +M_{m_2-1}\left(\beta, P\right) fe^Tx^{(l)}\Big]+\left(1-\alpha P\right)ve^Tx^{(l)}\Big\}.
\end{aligned}    
\end{equation*}

So we can derive the iterative matrix $T$ as
\begin{equation}
\begin{aligned}
T& =\left(\alpha-\beta\right)\left(I-\beta P\right)^{-1}P x^{\left(l, m_2\right)}+\left(I-\beta P\right)^{-1}\left(1-\alpha\right)v     \\
& =\left(I-\beta P\right)^{-1}\left\{\left(\alpha-\beta\right)\Big[   \beta^{m_2} \alpha^{m_1} P^{m_1+m_2+1}\right.\\
& \quad +\beta^{m_2} P^{m_2+1}M_{m_1-1}\left(\alpha, P\right)\left(1-\alpha P\right)ve^T \\
& \quad +M_{m_2-1}\left(\beta, P\right) fe^T\Big]+\left(1-\alpha P\right)ve^T \Big\},
\end{aligned}  
\label{T}
\end{equation}
where $f e^T = (\alpha - \beta) P x^{(i)} e^T + (1 - \alpha) v e^T$, $e$ is an $n$-vector with all elements $e_i = 1$ (i.e., the all-ones vector) and $v$ is an $n$-vector with non-negative elements that sum to 1. Note that $e^T$ is a $1 \times n$ row vector with all elements equal to 1 and has a rank of 1 and $f$ is an $n \times 1$ column vector. We define $Q = f e^T$, then we can analyze its properties.  
  
First, we observe that $Q = 0$ or $f = 0$ if and only if $\alpha = \beta$ and $\beta=1$. Since $\alpha$ in $(0, 1)$ and $\beta \in (0, \alpha)$, it follows that $f\neq 0$ and $Q = f e^T \neq 0$ .  
  
By Theorem \ref{rank}, the rank of a matrix product is bounded by the ranks of the individual matrices, i.e., 
\begin{equation*}
\text{rank}(Q) \leq \min\{\text{rank}(f), \text{rank}(e^T)\}. 
\end{equation*}

Since $f$ is a non-zero column vector, its rank is 1 and the rank of $e^T$ is also 1. Therefore, $\text{rank}(Q) \leq 1$.  Additionally, we have $Q \neq 0$ and its rank is at most 1, so $\text{rank}(Q) \geq 1$, then we can obtain rank($Q$) =1 and we can say $Q = f e^T $ is a rank-one matrix.
  
A rank-one matrix has a single non-zero eigenvalue, which is equal to its trace \cite{HornJohnson}. Therefore, $\text{trace}(Q)$ is the only non-zero eigenvalue of $Q$. Since 
\begin{equation*}
\text{trace}(Q) = (\alpha-\beta)trace(P x^{(i)} e^T)+(1-\beta)trace(v e^T) = (1 - \beta),
\end{equation*}
so $1-\beta$ is the non-zero eigenvalue of $Q$.

In summary, $Q = f e^T$ is a rank-one matrix which has a single non-zero eigenvalue equals to $1 - \beta$, and all other eigenvalues are 0.

Assume that $\pi_i$ is an eigenvalue of $P$, we have $\pi_1=1$ and $\mu_i=\frac{1}{1-\beta \pi_i}$ as an eigenvalue of $(I-\beta P)^{-1}$ \cite{LangvilleMeyer2006}, we can derive
\begin{equation}
\begin{aligned}
T x_1 & =(1-\beta)^{-1}\bigg\{(\alpha-\beta)  \alpha^{m_1} \beta^{m_2}+(\alpha-\beta)\bigg[\beta^{m_2} \frac{1-\alpha^{m_1}}{1-\alpha} (1-\alpha)\\
& \quad +\frac{1-\beta^{m_2}}{1-\beta} \cdot(1-\beta)\bigg]+(1-\alpha)\bigg\} x_1 \\
& =(1-\beta)^{-1}\left[(\alpha-\beta) \cdot \alpha^{m_1} \beta^{m_2}+(\alpha-\beta)\left(1-\alpha^{m_1} \beta^{m_2}\right)+(1-\alpha)\right] x_1 \\
& =\frac{\alpha-\beta+1-\alpha}{1-\beta} x_1 \\
& =x_1,
\end{aligned}
\end{equation}
and
\begin{equation}
\begin{aligned}
T x_i=\varphi_i x_i=\left(\frac{\beta^{m_2}\alpha^{m_1}(\alpha-\beta) \pi_i^{m_1+m_2+1}}{1-\beta \pi_i}\right) x_i, \quad i=2,3, \ldots, n ,
\end{aligned}
\end{equation}
where $\varphi_i$ is defined by Euation (\ref{vpi}).

Due to $\pi_1=1$, $\lambda_1=1$, $\pi_i=\frac{1}{\alpha} \lambda_i$, $\left|\lambda_i\right| \leq \alpha$ $(i=2, \ldots, n)$ \cite{Wei}, then for $i=2, \ldots, n$, we obtain
\begin{equation}
\left|\varphi_i\right|=\left|\frac{\beta^{m_2}\alpha^{m_1}(\alpha-\beta) \pi_i^{m_1+m_2+1}}{1-\beta \pi_i}\right| \leq \frac{\beta^{m_2}\alpha^{m_1}(\alpha-\beta)\left|\pi_i\right|^{m_1+m_2+1}}{1-\beta\left|\pi_i\right|} \leq \iota,
\label{varphi}
\end{equation}
where $\iota$=$\frac{(\alpha-\beta) \beta^{m_2}\alpha^{m_1}}{1-\beta}$. Then, we have 
$$
u=\omega \gamma_1 q\left(\lambda_1\right) x_1+\omega \sum_{i=2}^n \gamma_i \varphi_i q\left(\lambda_i\right) x_i,
$$
and 
$$\frac{u}{\omega \gamma_1 q(1)}-x_1=\sum_{i=2}^n \frac{\gamma_i}{\gamma_1}.\frac{q\left(\lambda_i\right)}{q(1)} \cdot \varphi_i^k x_i,
$$
where we have used the facts $\varphi_1=1$ and $G x_1=x_1$.
Let $p(\lambda)=q(\lambda) / q(1)$ satisfying $p(1)=1$. Thus we get

\begin{equation}
\begin{aligned}
\left\|\frac{u}{\omega \gamma_1 q(1)}-x_1\right\|_2 & \leq \sum_{i=2}^n\left|\frac{\gamma_i}{\gamma_1}\right| \cdot\left|\frac{q\left(\lambda_i\right)}{q(1)}\right| \cdot\left|\varphi_i\right|^k \leq \iota^k \cdot \sum_{i=2}^n\left|\frac{\gamma_i}{\gamma_1}\right| \cdot\left|p\left(\lambda_i\right)\right| \\
& \leq \iota^k \cdot \sum_{i=2}^n\left|\frac{\gamma_i}{\gamma_1}\right| \cdot \max _{i \neq 1}\left|p\left(\lambda_i\right)\right|.
\end{aligned}
\end{equation}

It follows that
\begin{equation}
\begin{aligned}
\left\|\left(I-\widetilde{P}_m\right) x_1\right\|_2=\min _{u \in \mathcal{K}_m\left(A, v_1^*\right)}\left\|u-x_1\right\|_2 \leq \iota^k \cdot \xi \epsilon^{(m)},
\end{aligned}
\end{equation}
where $k \geq$ maxit, $\xi=\sum_{i=2}^n\left|\frac{\gamma_i}{\gamma_1}\right|$, $\iota$=$\frac{(\alpha-\beta) \beta^{m_2}\alpha^{m_1}}{1-\beta}$, and the $\epsilon_m$ is defined by Equation (\ref{epsilon_m}).
\end{proof}
\bigskip
\begin{remark}  
Comparing our result in Theorem \ref{thm2} with the result in Theorem 3 of \cite{GU2017219}, it is easy to find that the Arnoldi-MIIO method can increase the convergence speed of the Arnoldi-Inout method by a factor of $\left(\alpha^{m_1}\beta^{m_2}\right)^k$. Therefore, from the view of theory, our proposed method will have a faster convergence than the Arnoldi-Inout method.
\end{remark} 
\bigskip

%%%%%%%%%%%%%%%%%%%%%%%%%GArnoldi-MIIO%%%%%%%%%%%%%%%%%%%%%%%%%%%%%%%%%%%%%%%%%%%%%%%%

\subsection{A  GArnoldi-MIIO algorithm for computing PageRank}\label{sec4}
\(\quad\)\ In this section, we introduce a new approach that incorporates the generalized Arnoldi (GArnoldi) method as a preliminary step to the MIIO approach. The new approach is called as GArnoldi-MIIO approach. We first give its construction, and then discuss its convergence.  

The construction of the  GArnoldi-MIIO approach is partially similar to the construction of the Arnoldi-MIIO approach (Algorithm 5). However, The primary difference between the GArnoldi-MIIO approach and the Arnoldi-MIIO approach (Algorithm 5) is that the former employs the generalized Arnoldi method (Algorithm 3) as a preliminary step, while the latter uses the thick restarted Arnoldi method (Algorithm 2). Now we outline the steps of the adaptive GArnoldi-MIIO method for computing PageRank as follows.

\begin{table}[htbp]
\renewcommand{\arraystretch}{1.2} % 调整为1.5倍行高 
\begin{tabularx}{\linewidth}{X}
\toprule
\textbf{Algorithm 6} The GArnoldi-MIIO algorithm for computing PageRank\\
\toprule
1. Specify the maximum size of the subspace $m=8$, select a positive vector $v$, establish the inner and outer tolerances $\eta$ and $\tau$, determine two multiple iteration parameters $m_1$ and $m_2$, specify control parameters $\alpha_1$, $\alpha_2$ and maxit to control the new two-stage matrix splitting (i.e., IIO) iterations, initialize the residual norm of the current MIIO iteration $d=1$, the residual norm of the previous iteration $d_0=d$, the residual norm $r=1$ and the counter trestart $=0$.\\
2. Run Algorithm 3 for a few times (2-3 times): iterate steps 1-8 for the first run and steps 2-8 otherwise. If the residual norm satisfies the prescribed tolerance, then stop, else continue.\\
3. Run the MIIO iteration with $x$ as the initial guess, where $x$ is the approximate vector obtained from the step 5 of the adaptive GArnoldi method (Algorithm 3).\\
restart $=0$;\\
(3.1) while restart $<$ maxit \& $r>\tau$\\
(3.2) $\quad \quad$ $x=x /\|x\|_2$; $z=P x$;\\
(3.3) $\quad \quad$ $ r=\|\alpha z+(1-\alpha) v-x\|_2$;\\
(3.4) $\quad \quad$ $r_0=r$; $r_1=r$; ratio $=0$;\\
(3.5) $\quad \quad$ while ratio $<\alpha_1$ \& $r>\tau.$\\
(3.6) $\quad \quad \quad \quad$ for $i=1:m_1$ $\quad \quad$ \% $m_1=1,2,3, \cdots$\\
(3.7) $\quad \quad \quad \quad \quad \quad$ $x$=$\alpha z+(1-\alpha)v$\\
(3.8) $\quad \quad \quad \quad \quad \quad$ $z$=$Px$\\
(3.9) $\quad \quad \quad \quad$ end\\
(3.10) $\quad \quad \quad \quad$ $f=(\alpha-\beta) z+(1-\alpha) v$;\\
(3.11) $\quad \quad \quad \quad$ for numer $=1: m_2 \quad \quad \% m_2=1,2,3, \cdots$\\
(3.12) $\quad \quad \quad \quad \quad \quad$$ x=f+\beta z$;\\
(3.13) $\quad \quad \quad \quad \quad \quad$$ z=P x$;\\
(3.14) $\quad \quad \quad \quad$ end\\
(3.15) $\quad \quad \quad \quad$ $ratio_1$=0;\\
(3.16) $\quad \quad \quad \quad$ while $ratio_1$ $<\alpha_2$ \& $d>\eta.$\\
(3.17) $\quad \quad \quad \quad \quad \quad$$x=f+\beta z$; $z=P x$;\\
(3.18) $\quad \quad \quad \quad \quad \quad$$ d=\|f+\beta z-x\|_2$;\\
(3.19) $\quad \quad \quad \quad \quad \quad$ $ratio_1=d / d_0$, $d_0=d$;\\
(3.20) $\quad \quad \quad \quad$ end\\
(3.21) $\quad \quad \quad \quad$ $r=\|\alpha z+(1-\alpha) v-x\|_2$;\\
(3.22) $\quad \quad \quad \quad $ ratio $=r / r_0$, $r_0=r$;\\
\hline
\end{tabularx}
\end{table}

\begin{table}[htbp]
\renewcommand{\arraystretch}{1.2} % 调整为1.5倍行高 
\begin{tabularx}{\linewidth}{X}
\hline
(3.23) $\quad \quad$ end\\
(3.24) $\quad \quad$  $x=\alpha z+(1-\alpha) v$;\\
(3.25) $\quad \quad$  $ x=x /\|x\|_2$;\\
(3.26) $\quad \quad$ if $r / r_1>\alpha_1$\\
(3.27) $\quad \quad \quad \quad$ restart = restart +1;\\
(3.28) $\quad \quad$ end\\
(3.29) end\\
if $r<\tau$, stop, else goto step 2 .\\
\toprule
\end{tabularx}
\end{table}

Next, we will analyze the convergence of the GArnoldi-MIIO algorithm. Specifically, our analysis focuses on the transition from the MIIO iteration to the adaptive GArnoldi method. 

Before delving into the proof of the convergence of the GArnoldi-MIIO algorithm, it is essential to establish a theoretical foundation. Therefore, we first introduce several theorems that provide crucial insights into the behavior of the algorithm. These theorems serve as building blocks for our analysis, helping us to understand the key components and mechanisms that govern the convergence of the GArnoldi-MIIO method. Subsequently, we apply these theorems to rigorously prove the convergence of the GArnoldi-MIIO algorithm through Theorem \ref{thm3}. 
\bigskip
\begin{theorem}[\cite{JIA19971}]
Assume that eigenvalues of the Google matrix $A$ are arranged in decreasing order $1=\left|\lambda_1\right|>\left|\lambda_2\right| \geq \cdots \geq\left|\lambda_n\right|$. Let $\mathcal{L}_{m-1}$ represent the set of polynomials of degree not exceeding $m-1$, $\lambda(A)$ denote the set of eigenvalues of the matrix $A$, $\left(\lambda_i, x_i\right)$, $i=1,2, \cdots, n$ and $\left(\widetilde{\lambda}_j, y_j\right)$, $j=1,2, \cdots, m$, denote the eigenpairs of $A$ and $H_m$, respectively. The Arnoldi method usually uses $\widetilde{\lambda}_j$ to approximate $\lambda_j$, $\widetilde{x}_j=V_my_j$ to approximate $\varphi_j$ (see Section 2.2). Then, Jia et al. adopted a new strategy \cite{JIA19971}. For each $\widetilde{\lambda}_j$, instead of using $\widetilde{\varphi}_j$ to approximate $\varphi_j$, Jia et al. tried to seek a unit norm vector $\widetilde{u}_j \in \mathcal{K}_m\left(A, v_1\right)$ satisfying the condition
\begin{equation}
\left\|\left(A-\widetilde{\lambda}_j I\right) \widetilde{u}_j\right\|_2=\min _{u \in \mathcal{K}_m\left(A, v_1\right)}\left\|\left(A-\widetilde{\lambda}_j I\right) u\right\|_2,
\end{equation}
and use it to approximate $\varphi_j$, where $\mathcal{K}_m\left(A, v_1\right)=\operatorname{span}\left(v_1, A v_1, \cdots, A^{m-1} v_1\right)$ is a Krylov subspace, $v_1 \in \mathbb{R}^n$ is an initial vector and $\widetilde{u}_j$ is called a refined approximate eigenvector corresponding to $\lambda_j$.
\end{theorem}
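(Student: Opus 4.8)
The plan is to turn what looks like a minimization over the (possibly large) Krylov subspace $\mathcal{K}_m(A,v_1)$ into a small, fully explicit problem, so that both the existence of the refined approximate eigenvector $\widetilde u_j$ and a concrete recipe for it fall out of a singular value decomposition of an $(m+1)\times m$ matrix. Concretely there are three things to establish: that the minimum is attained, how $\widetilde u_j$ is characterized, and in what sense it ``approximates'' the eigenvector $x_j$.

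First I would bring in the Arnoldi decomposition $A V_m = V_{m+1}\widetilde H_m$ furnished by Algorithm~1, where $V_m=[v_1,\dots,v_m]$ and $V_{m+1}$ both have orthonormal columns. Writing $\bar I_m\in\mathbb{R}^{(m+1)\times m}$ for the identity $I_m$ with a zero row appended, one has $V_m=V_{m+1}\bar I_m$, and every $u\in\mathcal{K}_m(A,v_1)$ is $u=V_m z$ for a unique $z$ with $\|u\|_2=\|z\|_2$; in particular $\|u\|_2=1$ iff $\|z\|_2=1$. Substituting and using the Arnoldi relation,
\[
(A-\widetilde\lambda_j I)\,V_m z=\bigl(V_{m+1}\widetilde H_m-\widetilde\lambda_j V_{m+1}\bar I_m\bigr)z=V_{m+1}\bigl(\widetilde H_m-\widetilde\lambda_j\bar I_m\bigr)z,
\]
and since $V_{m+1}$ is orthonormal this has the same $2$-norm as $(\widetilde H_m-\widetilde\lambda_j\bar I_m)z$. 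Hence the refined problem is equivalent to $\min_{\|z\|_2=1}\bigl\|(\widetilde H_m-\widetilde\lambda_j\bar I_m)z\bigr\|_2$.

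Second, I would invoke the SVD of $B_j:=\widetilde H_m-\widetilde\lambda_j\bar I_m\in\mathbb{C}^{(m+1)\times m}$, say $B_j=U_j\Sigma_j S_j^{\mathrm{T}}$. The map $z\mapsto\|B_j z\|_2$ is continuous and the unit sphere is compact, so the minimum is attained; by the variational characterization of singular values it equals the smallest singular value $\sigma_{\min}(B_j)$, achieved at the associated right singular vector $z_j$. Setting $\widetilde u_j:=V_m z_j$ then yields the refined approximate eigenvector (a unit vector in $\mathcal{K}_m(A,v_1)$), with residual $\|(A-\widetilde\lambda_j I)\widetilde u_j\|_2=\sigma_{\min}(B_j)$. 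This is exactly the small SVD already appearing in Step~4 of Algorithm~3 (there with $\widetilde\lambda_j=1$), which is why this theorem is the natural preliminary for the GArnoldi-MIIO analysis that follows.

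Finally, to justify calling $\widetilde u_j$ an approximation to $x_j$, I would quantify the angle $\angle(\widetilde u_j,x_j)$ by combining the optimality above with a spectral-gap estimate: if $\widetilde\lambda_j$ is close to the simple eigenvalue $\lambda_j$ and well separated from $\lambda(A)\setminus\{\lambda_j\}$, then $\sigma_{\min}(B_j)$ is small precisely when $\mathcal{K}_m(A,v_1)$ already contains a good approximation to $x_j$, and one recovers a bound of the same flavour as in Theorem~\ref{thm2}, controlled by the polynomial quantity $\epsilon_m$ of Equation~(\ref{epsilon_m}). The only genuinely delicate step is this last one: one must exclude the degenerate situation in which $B_j$ is (nearly) rank deficient for the wrong reason, i.e.\ control the separation of the shift $\widetilde\lambda_j$ from the unwanted part of the spectrum; the reduction to $B_j$ and the SVD argument preceding it are routine.
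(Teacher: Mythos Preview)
Your argument is mathematically sound, but there is nothing to compare it against: in the paper this ``theorem'' carries no proof. It is quoted from \cite{JIA19971} purely as background and is, in substance, a \emph{definition} of the refined approximate eigenvector $\widetilde u_j$ rather than an assertion with nontrivial content. The only thing implicitly claimed is that the minimum in the displayed equation is attained, and you dispatch that correctly by compactness of the unit sphere in $\mathbb{R}^m$ together with continuity.

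What you have added---the reduction via the Arnoldi relation to $\min_{\|z\|_2=1}\|(\widetilde H_m-\widetilde\lambda_j\bar I_m)z\|_2$ and its solution by the smallest singular pair of $\widetilde H_m-\widetilde\lambda_j\bar I_m$---is exactly Jia's computational characterization and is consistent with how the paper uses the idea (cf.\ Step~4 of Algorithm~3 with the shift $1$). Your third step, bounding the angle $\angle(\widetilde u_j,x_j)$ via a gap argument, actually anticipates the \emph{next} cited theorem in the paper (the bound $\|(A-\widetilde\lambda_j I)\widetilde u_j\|_2\le \frac{\sigma_{\max}(S)}{\sigma_{\min}(S)}(|\lambda_j-\widetilde\lambda_j|+\xi_j\min_p\max_{i\ne j}|p(\lambda_i)|)$), which is likewise stated without proof. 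So your proposal is correct but supererogatory relative to what the paper itself does here.
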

\bigskip
\begin{theorem}[\cite{JIA19971}]
Under the above notations, assume that $v_1=\sum_{i=1}^n \gamma_i \varphi_i$ with respect to the eigenbasis $\left\{\varphi_i\right\}_{i=1,2, \cdots, n}$ in which $\left\|\varphi_i\right\|_2=1$, $i=1,2, \cdots, n$ and $\gamma_i \neq 0$, let $S=\left[\varphi_1, \varphi_2, \cdots, \varphi_n\right]$, and
\begin{equation}
\xi_j=\sum_{i \neq j}\left|\lambda_i-\widetilde{\lambda}_j\right| \cdot \frac{\left|\gamma_i\right|}{\left|\gamma_j\right|},
\end{equation}
then
\begin{equation}
\left\|\left(A-\widetilde{\lambda}_j I\right) \widetilde{u}_j\right\|_2 \leq \frac{\sigma_{\max }(S)}{\sigma_{\min }(S)}\left(\left|\lambda_j-\widetilde{\lambda}_j\right|+\xi_j \min _{\substack{p \in \mathcal{L}_{m-1}\\ p\left(\lambda_j\right)=1}} \max _{i \neq j}\left|p\left(\lambda_i\right)\right|\right),
\end{equation}
where $\sigma_{\max }(S)$ and $\sigma_{\min }(S)$ are the largest and smallest singular value of the matrix $S$, respectively.
\end{theorem}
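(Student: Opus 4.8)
The plan is to bound the minimized residual $\|(A-\widetilde{\lambda}_j I)\widetilde{u}_j\|_2$ from above by testing the minimization against a well-chosen unit-norm trial vector drawn from $\mathcal{K}_m(A,v_1)$, and then pushing the estimate through the eigen-expansion. For any polynomial $p\in\mathcal{L}_{m-1}$ normalized so that $p(\lambda_j)=1$, the vector $u=p(A)v_1/\|p(A)v_1\|_2$ belongs to the Krylov subspace and has unit norm, so by the defining minimality property of $\widetilde{u}_j$,
\begin{equation*}
\|(A-\widetilde{\lambda}_j I)\widetilde{u}_j\|_2\le\frac{\|(A-\widetilde{\lambda}_j I)p(A)v_1\|_2}{\|p(A)v_1\|_2}.
\end{equation*}

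First I would expand everything in the eigenbasis. Writing $v_1=\sum_i\gamma_i\varphi_i$, one has $p(A)v_1=S\widehat{w}$ and $(A-\widetilde{\lambda}_j I)p(A)v_1=Sw$, where $S=[\varphi_1,\dots,\varphi_n]$, $\widehat{w}_i=\gamma_i p(\lambda_i)$, and $w_i=\gamma_i(\lambda_i-\widetilde{\lambda}_j)p(\lambda_i)$. Applying the singular-value sandwich $\|Sw\|_2\le\sigma_{\max}(S)\|w\|_2$ and $\|S\widehat{w}\|_2\ge\sigma_{\min}(S)\|\widehat{w}\|_2$ then reduces the estimate to a ratio of coefficient norms,
\begin{equation*}
\|(A-\widetilde{\lambda}_j I)\widetilde{u}_j\|_2\le\frac{\sigma_{\max}(S)}{\sigma_{\min}(S)}\cdot\frac{\|w\|_2}{\|\widehat{w}\|_2}.
\end{equation*}

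Then I would estimate $\|w\|_2/\|\widehat{w}\|_2$ directly: use $\|\widehat{w}\|_2\ge|\widehat{w}_j|=|\gamma_j|$ in the denominator (this is where $p(\lambda_j)=1$ and $\gamma_j\ne 0$ enter), and in the numerator split $w$ into its $j$-th entry plus the remaining entries, apply the triangle inequality, bound the tail by its $\ell^1$ norm, and factor out $\max_{i\ne j}|p(\lambda_i)|$. This gives
\begin{equation*}
\frac{\|w\|_2}{|\gamma_j|}\le|\lambda_j-\widetilde{\lambda}_j|+\Big(\sum_{i\ne j}\frac{|\gamma_i|}{|\gamma_j|}\,|\lambda_i-\widetilde{\lambda}_j|\Big)\max_{i\ne j}|p(\lambda_i)|=|\lambda_j-\widetilde{\lambda}_j|+\xi_j\max_{i\ne j}|p(\lambda_i)|,
\end{equation*}
upon recognizing the definition of $\xi_j$. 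Since the entire chain holds for every admissible $p$, taking the infimum over $p\in\mathcal{L}_{m-1}$ with $p(\lambda_j)=1$ on the right-hand side yields exactly the asserted inequality.

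There is no genuinely hard analytic step here; the subtleties I would watch are (i) the implicit requirement that $A$ be diagonalizable so that $S$ is square and invertible, guaranteeing $\sigma_{\min}(S)>0$ — the standing assumption of an eigenbasis $\{\varphi_i\}$ with all $\gamma_i\ne 0$ is precisely what supplies this — and (ii) the fact that every element of $\mathcal{K}_m(A,v_1)$ has the form $p(A)v_1$ for some $p\in\mathcal{L}_{m-1}$, which is what makes the trial vectors $u$ legitimate competitors in the minimization defining $\widetilde{u}_j$. The only real \emph{work} is the bookkeeping of the eigen-expansion together with the choice of the right norm inequalities, namely the singular-value sandwich for $S$ and the $\ell^2$--$\ell^1$ passage on the tail sum.
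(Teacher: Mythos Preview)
Your argument is correct and is essentially the original proof from Jia (1997): test the minimization against $p(A)v_1/\|p(A)v_1\|_2$, expand in the eigenbasis, sandwich with the extreme singular values of $S$, and then estimate the coefficient ratio by dropping all but the $j$-th entry in the denominator and passing to the $\ell^1$ norm in the numerator. Note, however, that the paper does not supply its own proof of this statement at all; it is quoted verbatim as a known result from \cite{JIA19971} and used only as a building block for the subsequent Theorem on the GArnoldi--MIIO method, so there is no in-paper proof to compare against.
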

\bigskip
\begin{theorem}[\cite{Wen2023}]
Let $\widetilde{G}=\operatorname{diag}\left\{w_1, w_2, \cdots, w_n\right\}$, $w_i>0$ $(1 \leq i \leq n)$, then for any vector $x \in \mathbb{R}^n$, have
\begin{equation}
\min _{1 \leq i \leq n} w_i \cdot\|x\|_2^2 \leq\|x\|_{\widetilde{G}}^2 \leq \max _{1 \leq i \leq n} w_i \cdot\|x\|_2^2,
\label{thm5}
\end{equation}
where $\|\cdot\|_2$ denotes the 2-norm and $\|\cdot\|_{\widetilde{G}}$ denotes the $\widetilde{G}$-norm.
\end{theorem}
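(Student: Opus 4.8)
The plan is to reduce the claim to an elementary coordinatewise monotonicity estimate, exploiting that here $\widetilde{G}$ is \emph{diagonal} with strictly positive entries. First I would expand the $\widetilde{G}$-norm explicitly: writing $x=(x_1,x_2,\dots,x_n)^{\mathrm T}$ and using the definition $\|x\|_{\widetilde{G}}^2=x^{\mathrm T}\widetilde{G}x$ together with $\widetilde{G}=\operatorname{diag}\{w_1,w_2,\dots,w_n\}$, one immediately obtains
\[
\|x\|_{\widetilde{G}}^2=\sum_{i=1}^n w_i x_i^2 .
\]
In the notation of the earlier decomposition $\widetilde{G}=Q^{\mathrm T}DQ$ this is simply the case $Q=I$, $D=\widetilde{G}$, so nothing is lost by working directly with the coordinates.

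Next I would set $w_{\min}=\min_{1\le i\le n} w_i$ and $w_{\max}=\max_{1\le i\le n} w_i$, both strictly positive by hypothesis, and use the pointwise bounds $w_{\min}\le w_i\le w_{\max}$. The single point worth spelling out is that $x_i^2\ge 0$, so multiplying these scalar inequalities by $x_i^2$ preserves their direction, yielding $w_{\min}x_i^2\le w_i x_i^2\le w_{\max}x_i^2$ for every $i$. Summing over $i=1,\dots,n$ and recalling $\|x\|_2^2=\sum_{i=1}^n x_i^2$ then gives
\[
w_{\min}\,\|x\|_2^2\;\le\;\sum_{i=1}^n w_i x_i^2\;\le\;w_{\max}\,\|x\|_2^2 ,
\]
which, combined with the identity above, is precisely Equation (\ref{thm5}).

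Since the whole argument is a single monotonicity-plus-summation step, I do not anticipate any genuine obstacle; the only thing requiring attention is the (essentially trivial) nonnegativity remark on $x_i^2$ used when scaling the weight inequalities. A slightly more structural alternative would be to observe that $\|x\|_{\widetilde{G}}^2/\|x\|_2^2$ is a convex combination of the weights $w_i$ (a Rayleigh-quotient observation), hence lies in $[w_{\min},w_{\max}]$; but I would favor the direct coordinatewise computation, as it is shorter and fully self-contained.
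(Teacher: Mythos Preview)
Your proof is correct and entirely self-contained; the coordinatewise expansion $\|x\|_{\widetilde{G}}^2=\sum_i w_i x_i^2$ followed by the trivial bounds $w_{\min}x_i^2\le w_i x_i^2\le w_{\max}x_i^2$ is exactly the right argument. Note, however, that the paper does not supply its own proof of this statement: it is quoted as a cited result from \cite{Wen2023} and used as a tool in the proof of Theorem~\ref{thm3}. So there is no ``paper's proof'' to compare against, but your argument is precisely the standard one and would be the natural proof in any case.
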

\bigskip
\begin{theorem}\label{thm3}
Under the above notations, assume that $v_1=\sum_{i=1}^n \gamma_i \varphi_i$ with respect to the eigenbasis $\left\{\varphi_i\right\}_{i=1,2, \cdots, n}$ in which $\left\|\varphi_i\right\|_2=1$, $i=1,2, \cdots, n$ and $\gamma_1 \neq 0$, let $S=\left[\varphi_1, \varphi_2, \cdots, \varphi_n\right]$, $\widetilde{G}=\operatorname{diag}\left\{w_1, w_2, \cdots, w_n\right\}$, $w_i>0$ $(1 \leq i \leq n)$, and
\begin{equation}
\xi=\sum_{i=2}^n\left|\lambda_i-1\right| \cdot \frac{\left|\gamma_i\right|}{\left|\gamma_1\right|}, \quad \zeta=\sqrt{\frac{\max _{1 \leq i \leq n} w_i}{\min _{1 \leq i \leq n} w_i}},
\label{xizeta}
\end{equation}
then
\begin{equation}
\|(A-I) u\|_{\widetilde{G}} \leq \iota^k \frac{\xi \cdot \zeta}{\sigma_{\min }(S)} \min _{\substack{p \in \mathcal{L}_{m-1}\\ p\left(\lambda_1\right)=1}} \max _{\lambda \in \sigma(A) /\left\{\lambda_1\right\}}|p(\lambda)|,
\end{equation}
where $\iota$=$\frac{(\alpha-\beta) \beta^{m_2}\alpha^{m_1}}{1-\beta}$, $u \in \mathcal{K}_m\left(A, v_1^{\text {new }}\right)$, and $\sigma_{\text {min }}(S)$ is the smallest singular value of the matrix $S$.
\end{theorem}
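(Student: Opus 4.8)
The plan is to combine three ingredients that the paper has already assembled: the spectral analysis of the MIIO iteration matrix $T$ carried out in the proof of Theorem \ref{thm2} (in particular the facts $Tx_1=x_1$ and $|\varphi_i|\le\iota$ for $i\ge 2$), the refined-Arnoldi residual bound quoted from \cite{JIA19971} (the preceding Theorem with the quantity $\xi_j$), and the norm-equivalence estimate from \cite{Wen2023} (the preceding Theorem, inequality \eqref{thm5}), which converts a $2$-norm bound into a $\widetilde G$-norm bound at the cost of the factor $\zeta=\sqrt{\max_i w_i/\min_i w_i}$. The target vector is $u\in\mathcal K_m(A,v_1^{\text{new}})$, where $v_1^{\text{new}}$ is (the normalization of) $T^k v_1$, i.e. the vector produced by running $k\ge\mathrm{maxit}$ steps of MIIO before feeding the result into the generalized Arnoldi process.

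First I would expand $v_1=\sum_{i=1}^n\gamma_i\varphi_i$ in the eigenbasis of $A$ (equivalently of $P$, since $A$ and $P$ share eigenvectors up to the rank-one Google correction, exactly as used in Theorem \ref{thm2}), so that $T^k v_1=\gamma_1 x_1+\sum_{i=2}^n\gamma_i\varphi_i^{\,k}x_i$ using $Tx_1=x_1$ and $Tx_i=\varphi_i x_i$. Hence for $u=q(A)v_1^{\text{new}}$ with $q\in\mathcal L_{m-1}$ I can write, after dividing by the scalar $\omega\gamma_1 q(1)$ and setting $p(\lambda)=q(\lambda)/q(1)$ with $p(1)=1$,
\begin{equation*}
\frac{u}{\omega\gamma_1 q(1)}=x_1+\sum_{i=2}^n\frac{\gamma_i}{\gamma_1}\,\varphi_i^{\,k}\,p(\lambda_i)\,x_i .
\end{equation*}
Applying $(A-I)$ and using $Ax_1=x_1$ kills the $x_1$ term, so $(A-I)u$ (up to the harmless scalar) equals $\sum_{i\ge2}(\gamma_i/\gamma_1)\varphi_i^{\,k}p(\lambda_i)(\lambda_i-1)x_i$. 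Taking $2$-norms, bounding $\|x_i\|_2=1$, pulling out $\max_{i\ne1}|\varphi_i|^{k}\le\iota^{k}$ and $\max_{\lambda\in\sigma(A)/\{\lambda_1\}}|p(\lambda)|$, and recognizing $\xi=\sum_{i\ge2}|\lambda_i-1||\gamma_i/\gamma_1|$ gives a clean $2$-norm bound of the shape $\iota^k\,\xi\cdot\min_p\max_{\lambda\ne\lambda_1}|p(\lambda)|$; the factor $1/\sigma_{\min}(S)$ enters exactly as in the refined-Arnoldi theorem of \cite{JIA19971} when one passes from the eigenbasis expansion to a genuine minimization over $\mathcal K_m$ (bounding $\|S^{-1}\|$ or equivalently the change-of-basis distortion). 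Finally I would invoke \eqref{thm5} to replace $\|\cdot\|_2$ by $\|\cdot\|_{\widetilde G}$, picking up the factor $\zeta$, which yields the asserted estimate
\begin{equation*}
\|(A-I)u\|_{\widetilde G}\le\iota^k\,\frac{\xi\cdot\zeta}{\sigma_{\min}(S)}\min_{\substack{p\in\mathcal L_{m-1}\\ p(\lambda_1)=1}}\max_{\lambda\in\sigma(A)/\{\lambda_1\}}|p(\lambda)| .
\end{equation*}

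The main obstacle, and the step needing the most care, is the bookkeeping around the initial vector $v_1^{\text{new}}$ and the adaptive weight matrix $\widetilde G$. Since $\widetilde G$ in Algorithm 3 is updated from the current residual at every restart, one must be explicit that the bound is stated for a fixed $\widetilde G$ (the one in force during the current cycle), with $\zeta$ its condition-number-type constant; otherwise the inequality is ambiguous. A second delicate point is the transition from the eigenvector-expansion inequality to the true minimum over $u\in\mathcal K_m(A,v_1^{\text{new}})$: this is where the factor $1/\sigma_{\min}(S)$ comes from, and it must be imported verbatim from the refined-Arnoldi theorem rather than re-derived, since $A$ is nonsymmetric and the eigenbasis is not orthonormal. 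Everything else — the geometric decay $\iota^k$ from MIIO preprocessing, the identification of $\xi$, and the norm switch — is routine given the earlier results, so I would keep those steps brief and concentrate the writing on pinning down $\widetilde G$ and $v_1^{\text{new}}$.
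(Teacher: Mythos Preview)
Your overall strategy matches the paper's: expand $v_1^{\text{new}}=\omega T^k v_1$ in the eigenbasis, use $Tx_1=x_1$ and $|\varphi_i|\le\iota$ from Theorem~\ref{thm2}, kill the $x_1$ term with $(A-I)$, and then bound. But two steps of your plan do not work as written, and both trace back to the same omission: you never set up the \emph{ratio} that the paper starts from.

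The paper interprets $u$ as the unit-$\widetilde G$-norm refined Ritz vector for the target eigenvalue $\lambda_1=1$, so that
\[
\|(A-I)u\|_{\widetilde G}=\min_{q\in\mathcal L_{m-1}}\frac{\|(A-I)q(A)v_1^{\text{new}}\|_{\widetilde G}}{\|q(A)v_1^{\text{new}}\|_{\widetilde G}} .
\]
Then \eqref{thm5} is applied \emph{twice}: once upward on the numerator ($\sqrt{\max_i w_i}$) and once downward on the denominator ($\sqrt{\min_i w_i}$); it is the \emph{quotient} of these that produces $\zeta$. A single invocation of \eqref{thm5}, as you propose, only delivers $\sqrt{\max_i w_i}$, not $\zeta$.

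Similarly, the factor $1/\sigma_{\min}(S)$ does \emph{not} come from quoting the Jia theorem as a black box. It arises when you lower-bound the denominator: writing $q(A)T^k v_1=\sum_i q(\lambda_i)\varphi_i^k\gamma_i x_i=Sc$ with $c=(q(\lambda_i)\varphi_i^k\gamma_i)_i$, one has $\|Sc\|_2\ge\sigma_{\min}(S)\|c\|_2\ge\sigma_{\min}(S)|\gamma_1||q(1)|$ (keeping only the $i=1$ term, $\varphi_1=1$). Your ``divide by $\omega\gamma_1 q(1)$'' shortcut bypasses the denominator entirely and leaves you with a bound on $\|(A-I)u\|/|\omega\gamma_1 q(1)|$, not on $\|(A-I)u\|_{\widetilde G}$; the hand-wave about ``change-of-basis distortion'' does not recover what is missing. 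Once you form the ratio and bound numerator and denominator separately, both $\zeta$ and $1/\sigma_{\min}(S)$ fall out and the rest of your argument goes through unchanged.
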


\begin{proof}
For any $u \in \mathcal{K}_m\left(A, v_1^{\text {new }}\right)$, there exists $q(x) \in \mathcal{L}_{m-1}$ such that
\begin{equation}
\begin{aligned}
\|(A-I) u\|_{\widetilde{G}} & =\min _{q \in \mathcal{L}_{m-1}} \frac{\left\|(A-I) q(A) v_1^{n e w}\right\|_{\widetilde{G}}}{\left\|q(A) v_1^{n e w}\right\|_{\widetilde{G}}}=\min _{q \in \mathcal{L}_{m-1}} \frac{\left\|(A-I) q(A) \omega T^k v_1\right\|_{\widetilde{G}}}{\left\|q(A) \omega T^k v_1\right\|_{\widetilde{G}}} \\
& =\min _{q \in \mathcal{L}_{m-1}} \frac{\left\|(A-I) q(A) T^k \gamma_1 x_1+\sum_{i=2}^n(A-I) q(A) T^k \gamma_i x_i\right\|_{\widetilde{G}}}{\left\|\sum_{i=1}^n q(A) T^k \gamma_i \varphi_i\right\|_{\widetilde{G}}} \\
& =\min _{q \in \mathcal{L}_{m-1}} \frac{\left\|\sum_{i=2}^n\left(\lambda_i-1\right) q\left(\lambda_i\right) \varphi_i^k \gamma_i x_i\right\|_{\widetilde{G}}}{\left\|\sum_{i=1}^n q\left(\lambda_i\right) \varphi_i^k \gamma_i x_i\right\|_{\widetilde{G}}},
\end{aligned}
\label{31}
\end{equation}
where $v_1=\sum_{i=1}^n \gamma_i x_i$ is the expansion of $v_1$ within the eigen-basis $\left[x_1, x_2, \ldots, x_n\right]$.

Using Equation (\ref{varphi}) and Equation (\ref{thm5}), for the numerator of Equation (\ref{31}), it has
\begin{equation}
\begin{aligned}
\left\|\sum_{i=2}^n\left(\lambda_i-1\right) q\left(\lambda_i\right) \varphi_i^k \gamma_i x_i\right\|_{\widetilde{G}} & \leq \sqrt{\max _{1 \leq i \leq n} w_i} \cdot\left\|\sum_{i=2}^n\left(\lambda_i-1\right) q\left(\lambda_i\right) \varphi_i^k \gamma_i x_i\right\|_2 \\
& \leq \sqrt{\max _{1 \leq i \leq n} w_i} \cdot \sum_{i=2}^n\left|\lambda_i-1\right| \cdot\left|\varphi_i\right|^k \cdot\left|\gamma_i\right| \cdot\left|q\left(\lambda_i\right)\right| \\
& \leq \sqrt{\max _{1 \leq i \leq n} w_i} \cdot \sum_{i=2}^n\iota^k \cdot\left|\lambda_i-1\right| \cdot\left|\gamma_i\right| \cdot\left|q\left(\lambda_i\right)\right|,
\end{aligned}
\label{32}
\end{equation}
where $\iota$=$\frac{(\alpha-\beta) \beta^{m_2}\alpha^{m_1}}{1-\beta}$.

For the denominator of Equation (\ref{31}), it obtains
\begin{equation}
\begin{aligned}
\left\|\sum_{i=1}^n q\left(\lambda_i\right) \varphi_i^k \gamma_i x_i\right\|_{\widetilde{G}}^2 & \geq \min _{1 \leq i \leq n} w_i \cdot\left\|\sum_{i=1}^n q\left(\lambda_i\right) \varphi_i^k \gamma_i x_i\right\|_2^2 \\
& \geq \min _{1 \leq i \leq n} w_i \cdot \sigma_{\text {min }}^2(S) \cdot \sum_{i=1}^n\left|\varphi_i^k\right|^2 \cdot\left|\gamma_i\right|^2 \cdot\left|q\left(\lambda_i\right)\right|^2 .
\end{aligned}
\label{33}
\end{equation}

Combining Equation (\ref{32}) and Equation (\ref{33}) into Equation (\ref{31}), we get
$$
\begin{aligned}
\|(A-I) u\|_{\widetilde{G}} & \leq \min _{q \in \mathcal{L}_{m-1}} \frac{\sqrt{\max _{1 \leq i \leq n} w_i} \cdot \sum_{i=2}^n \iota^k \cdot\left|\lambda_i-1\right| \cdot\left|w_i\right| \cdot\left|q\left(\lambda_i\right)\right|}{\sqrt{\min _{1 \leq i \leq n} w_i \cdot \sigma_{\min }^2(S) \cdot \sum_{i=1}^n\left|\varphi_i^k\right|^2 \cdot\left|\gamma_i\right|^2 \cdot\left|q\left(\lambda_i\right)\right|^2}} \\
& \leq \frac{1}{\sigma_{\min }(S)} \cdot \sqrt{\frac{\max _{1 \leq i \leq n} w_i}{\min _{1 \leq i \leq n} w_i}} \cdot \min _{q \in \mathcal{L}_{m-1}} \frac{\sum_{i=2}^n \iota^k \cdot\left|\lambda_i-1\right| \cdot\left|\gamma_i\right| \cdot\left|q\left(\lambda_i\right)\right|}{\left|\gamma_1\right| \cdot\left|q\left(\lambda_1\right)\right|} \\
& =\frac{1}{\sigma_{\min }(S)} \cdot \sqrt{\frac{\max _{1 \leq i \leq n} w_i}{\min _{1 \leq i \leq n} w_i}} \cdot \iota^k \cdot \min _{q \in \mathcal{L}_{m-1}} \sum_{i=2}^n\left|\lambda_i-1\right| \cdot \frac{\left|\gamma_i\right|}{\left|\gamma_1\right|} \cdot \frac{\left|q\left(\lambda_i\right)\right|}{\left|q\left(\lambda_1\right)\right|}.
\end{aligned}
$$

Let $p(\lambda)=q(\lambda) / q(1)$, where $p(1)=1$, then we have

\begin{equation}
\|(A-I) u\|_{\widetilde{G}} \leq \iota^k \frac{\xi \cdot \zeta}{\sigma_{\min }(S)} \min _{\substack{p \in \mathcal{L}_{m-1}\\ p\left(\lambda_1\right)=1}} \max _{\lambda \in \sigma(A) /\left\{\lambda_1\right\}}|p(\lambda)|,
\end{equation}
where $\iota$=$\frac{(\alpha-\beta) \beta^{m_2}\alpha^{m_1}}{1-\beta}$, $u \in \mathcal{K}_m\left(A, v_1^{\text {new }}\right)$, $\sigma_{\text {min }}(S)$ is the smallest singular value of the matrix $S$, $\xi$ and $\zeta$ are defined by Equation (\ref{xizeta}).
\end{proof}
\bigskip
\begin{remark}  
Comparing our result in Theorem \ref{thm3} with the result in Theorem 4 of \cite{Wen2023}, it is easy to find that the GArnoldi-MIIO method can increase the convergence speed of the GArnoldi-MPIO method by a factor of $\left(\beta^{m_2}\right)^k$. Therefore, from the view of theory, our proposed approach will have a faster convergence than the GArnoldi-MPIO method.
\end{remark} 

\bigskip
\bigskip
\bigskip
\bigskip

%%%%%%%%%%%%%%%%%%%%%%%%%%%%%%Numerical Experiments%%%%%%%%%%%%%%%%%%%%%%%%%%%%%%%%%%%%
%%%%%%%%%%%%%%%%%%%%%%%%%%%%%%Numerical Experiments%%%%%%%%%%%%%%%%%%%%%%%%%%%%%%%%%%%%

\section{Numerical Experiments}
\(\quad\)\ In this section, we test the effectiveness of our approaches, which are the MIIO, the Arnoldi-MIIO  and the GArnoldi-MIIO, and we compare our approaches with the Arnoldi-Inout \cite{GU2017219}, the GArnoldi-MPIO algorithm \cite{Wen2023}, the  Arnoldi-IIO \cite{Dong2017}, the IIO \cite{Dong2017} and in terms of iteration counts (IT), the number of matrix–vector products
(Mv) and the computing time in seconds (CPU). Moreover, considering that all the approaches we have proposed are improvements derived from the IIO method, in order to describe the efficiency of our proposed approaches, we define

\begin{equation}
\mathrm{Speedup}=\frac{\mathrm{CPU}_{\mathrm{IIO}}-\mathrm{CPU}_{\mathrm{our\enspace approach}}}{\mathrm{CPU}_{\mathrm{IIO}}} \times 100 \% .
\end{equation}

All the numerical results are obtained by using MATLAB R2021b on the Windows 10 64 bit operating system with Intel (R) Core (TM) i5-7300U CPU  2.60GHz  2.70 GHz.

In Table 1, we list the characteristics of our test matrices, where $n$ denotes the matrix size, $nnz$ is the number of nonzero elements, and den is the density, which is defined by 
\begin{equation*}
den = \frac{nnz}{n\times n}\times100\%,
\end{equation*}
all matrices can be accessed from https://sparse.tamu.edu/.
\vspace{-10pt}
\begin{table}[htbp]
\renewcommand{\arraystretch}{1.3} % 调整为1.5倍行高 
   \captionsetup{justification=centering} % 如果使用 caption 宏包，则可以使用此命令来居中标题  
    \caption{Characteristics of the test matrices} % 英文标题  
    \begin{tabular}{lccc}
        \toprule
        name & $n$ & $nnz$ & $den$ \\
        \toprule
        web-Stanford & 281,903 & 2,312,497 & $0.291 \times 10^{-2}$ \\
        Stanford-Berkeley & 683,446 & 7,583,376 & $0.162 \times 10^{-2}$ \\
        web-Google & 916,428 & 5,105,039& $0.608 \times 10^{-5}$ \\
        \toprule
    \end{tabular}
\end{table}

To ensure fairness, we utilized the same initial vector $x^{(0)}=e/ n$ across all methods, where $e=[1,1, \cdots, 1]^{\mathrm{T}}$ and $n$ represents the dimension of the test matrix. To investigate the speedup of the new algorithm in solving PageRank when $\alpha$ is close to 1, we set the damping factor $\alpha$ to 0.99, 0.993, 0.995 and 0.998. In all experiments, we adopted the 2-norm of the residual vector as the stopping criterion, with a preset tolerance of 
 $tol=10^{-8}$ and an internal tolerance of $\eta=10^{-2}$. Additionally, we set $\beta=0.5$. Except for Arnoldi-Inout, we fixed the parameter to $m_1=5$ in all other methods, as it can yield nearly optimal results in the Inner-Outer iteration method \cite{guchuanqing2014}, and setting $m_2=3$ ($m_2$ = 2, 3 or 4, the
numerical results of the IIO method are satisfactory and have nearly the same nice numerical performance \cite{Dong2017}). The parameters chosen to flip-flop are set as $\alpha_1 = \alpha-0.1$ and $\alpha_2 = \alpha-0.1$ in the Arnoldi-MIIO, GArnoldi-MIIO, Arnoldi-IIO, GArnoldi-MPIO and Arnoldi-Inout methods.

 Figures 1, 2 and 3 plot the convergence behavior of the MIIO method, the Arnoldi-MIIO method, the GArnoldi-MIIO method, the Arnoldi-Inout method, the GArnoldi-MPIO method, the Arnoldi-IIO method and the IIO method for $\alpha$ = 0.99, 0.993, 0.995 and 0.998, respectively. They show that our proposed approaches converges faster than its counterparts again. Specially, as the matrix size increases, the advantage of Arnoldi-MIIO becomes increasingly apparent, while the performance of MIIO and GArnoldi-MIIO in comparison becomes less prominent.
 
 Numerical results of the seven methods for all the test matrices are provided in Tables 2, 3 and 4. Based on the provided comparison results of the algorithms, we can draw the following conclusions:
 
\begin{itemize}
\item For all values of $\alpha$, the MIIO (Modified IIO) algorithm significantly outperforms the original IIO in terms of speed. The provided percentage improvements indicate that MIIO achieves an average speedup of over 40\%, while Arnoldi-MIIO achieves an average speedup of over 80\%. However, the speedup of GArnoldi-MIIO varies significantly with changes in the value of $\alpha$ and the matrix size. As the matrix size increases, the speedup decreases, anging from a minimum of 26.94\% to a maximum of 79.78\%. The MIIO versions of the algorithms (Arnoldi-MIIO and GArnoldi-MIIO) consistently outperform their non-MIIO (Arnoldi-Inout and GArnoldi-MPIO) counterparts in terms of the required CPU time, directly reflecting the efficiency advantages of MIIO.

\item For all algorithms, the number of matrix-vector products and the number of iterations required to reach convergence increase with the value of $\alpha$. The MIIO versions of the algorithms (Arnoldi-MIIO and GArnoldi-MIIO) typically requires less number of matrix-vector products, contributing to reduced computational costs, and the MIIO versions of the algorithms (Arnoldi-MIIO and GArnoldi-MIIO) generally require fewer iterations, leading to faster convergence.
\end{itemize}

\begin{figure}[H]  
    \centering  
    \includegraphics[width=\textwidth]{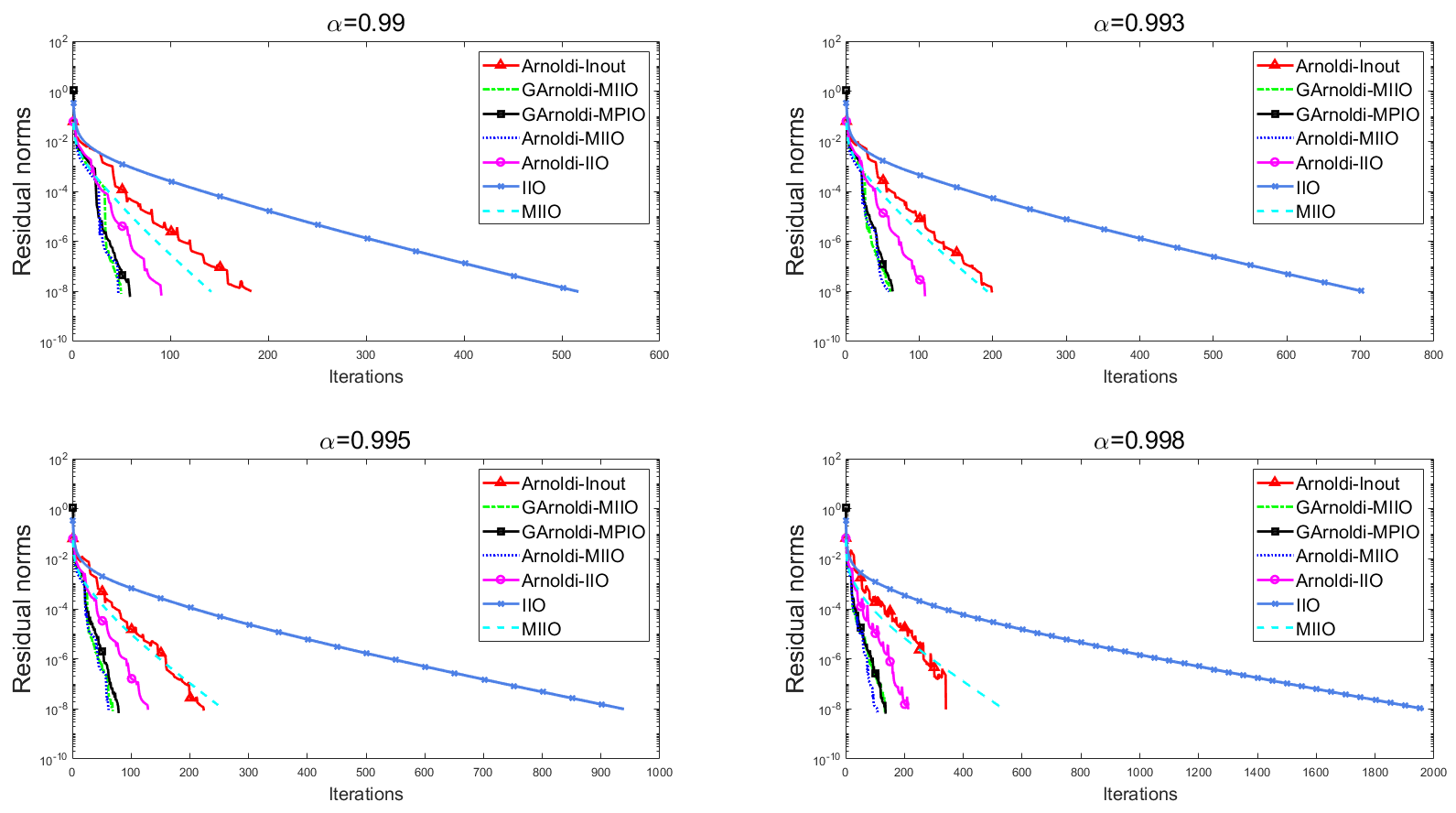}  
    \caption{Convergence of the computation for the web-Stanford matrix when $m$ = 8, $p$ = 4, maxit = 10.}  
\end{figure}

\begin{figure}[H]  
    \centering  
    \includegraphics[width=\textwidth]{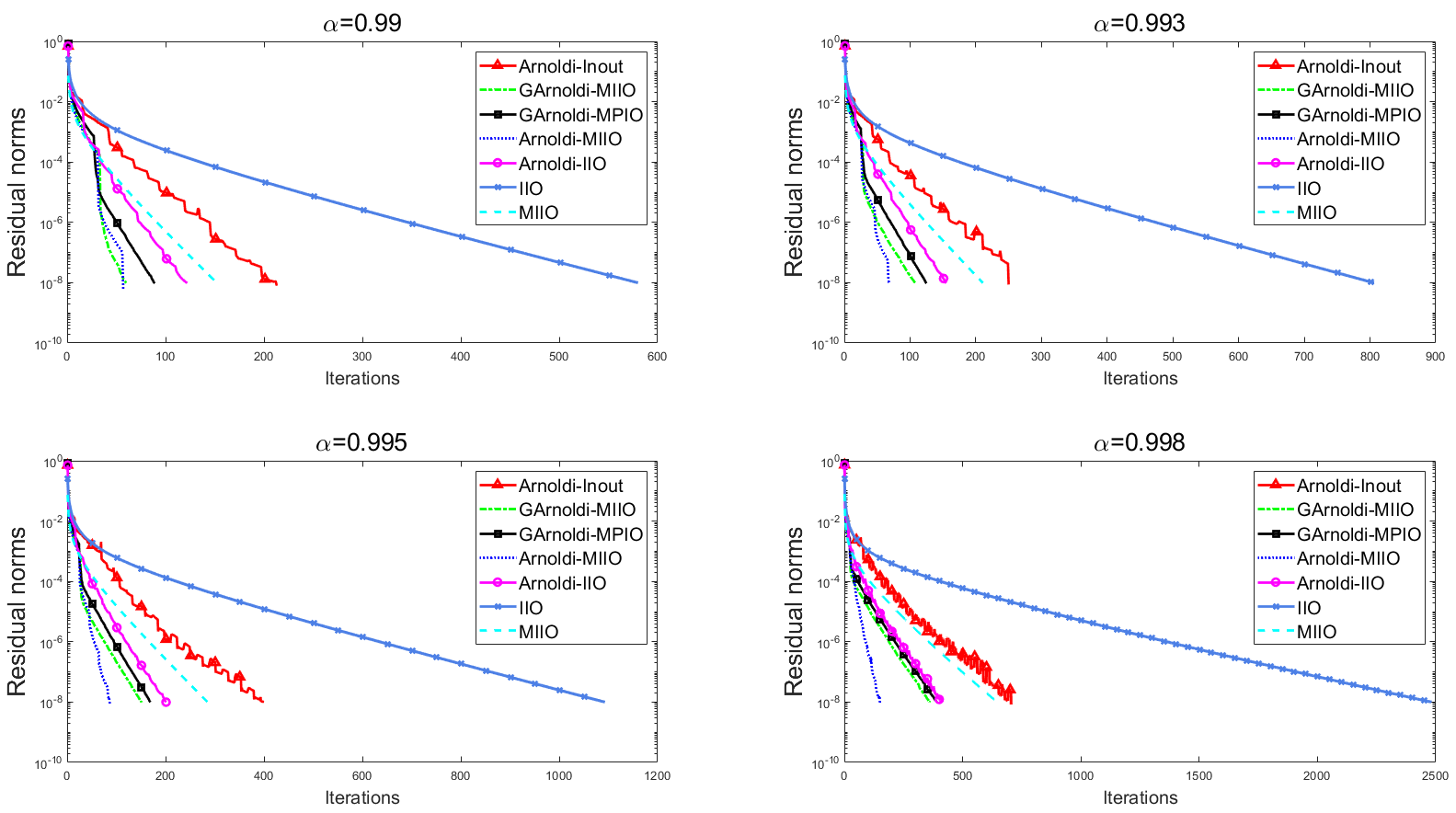}  
    \caption{Convergence of the computation for the Stanford-Berkeley matrix when $m$ = 8, $p$ = 4, maxit = 10.}  
\end{figure}

\begin{figure}[H]  
    \centering  
    \includegraphics[width=\textwidth]{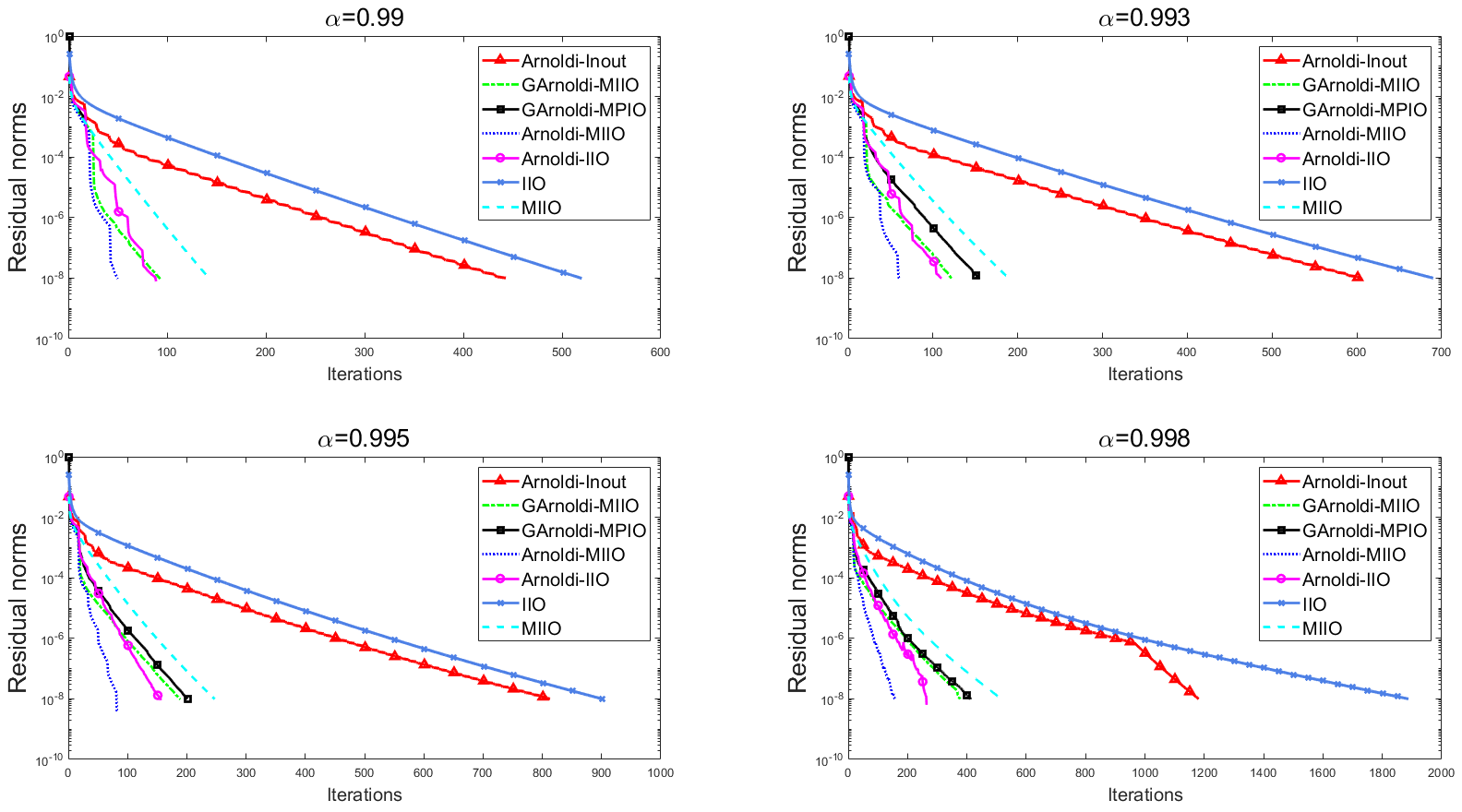}  
    \caption{Convergence of the computation for the web-Google matrix when $m$ = 8, $p$ = 4, maxit = 10.}  
\end{figure}

\vspace{20pt}
\begin{table}[htbp] 
\renewcommand{\arraystretch}{1.4} % 调整为1.5倍行高 
\caption{Numerical results of the seven methods for the web-Stanford matrix}\label{tab3}
\begin{tabular}{@{}clcccc@{}}
\toprule
$\alpha$	&Method &	Mv&	CPU&	IT&Speedup\\
\midrule
0.99&Arnoldi-Inout	&380&	8.1088&	183&  \\
&GArnoldi-MIIO&	200	&6.2017&	50&76.46\%\\
&GArnoldi-MPIO&	358	&7.6212&	59\\
&Arnoldi-MIIO&100&	4.8943&	47&81.42\%

\\
&Arnoldi-IIO&167&	5.7427&	91\\
&IIO&	2465&	26.3442&	517\\
&MIIO&1522	&14.9823&	142&43.13\%
\\
\midrule
0.993&Arnoldi-Inout&	426&	8.7421	&199&\\
&GArnoldi-MII&344	&8.2605	&61&76.39\%
\\
&GArnoldi-MPIO&414	&8.6431&	64&\\
&Arnoldi-MIIO&116	&6.2039&	59&82.27\%
\\
&Arnoldi-IIO&195&	6.7224&	108&\\
&IIO&	3269	&34.9836	&704&\\
&MIIO&2017&	19.9171&	193&43.07\%
\\
\midrule
0.995&Arnoldi-Inout&467&9.509&224&\\
&GArnoldi-MII&416&9.6634&69&79.22\%
\\
&GArnoldi-MPIO&542&11.1135&79&\\
&Arnoldi-MIIO&138&6.7679&62&85.45\%
\\
&Arnoldi-IIO&220&8.099&129&\\
&IIO&4261&46.5041&939&\\
&MIIO&2629&26.8468&257&42.27\%
\\
\midrule
0.998&Arnoldi-Inout&708&14.6661&341&\\
&GArnoldi-MIIO&968&20.0652&134&79.78\%

\\
&GArnoldi-MPIO&1006&20.0936&136&\\
&Arnoldi-MIIO&221&11.6688&109&88.24\%
\\
&Arnoldi-IIO&367&13.3609&213&\\
&IIO&8489&99.2337&1966&\\
&MIIO&5230&56.1842&537&43.38\%
\\
\toprule
\end{tabular}
\end{table}
\FloatBarrier

\begin{table}[htbp]
\renewcommand{\arraystretch}{1.4} % 调整为1.5倍行高 
\caption{Numerical results of the seven methods for the Stanford-Berkeley matrix}\label{tab2}
\begin{tabular}{@{}clcccc@{}}
\toprule
$\alpha$	&Method &	Mv&	CPU&	IT&Speedup\\
\midrule
0.99&Arnoldi-Inout&446&18.6384&213&\\
&GArnoldi-MIIO&292&13.9278&60&66.48\%
\\
&GArnoldi-MPIO&572&22.4963&89&\\
&Arnoldi-MIIO&112&8.6223&57&79.25\%\\
&Arnoldi-IIO&256&13.2989&122&\\
&IIO&2215&41.5553&580&\\
&MIIO&1569&22.0147&152&47.02\%
\\
\midrule
0.993&Arnoldi-Inout&518&22.7909&250&\\
&GArnoldi-MIIO&716&28.1435&108&49.95\%
\\
&GArnoldi-MPIO&876&35.3669&125&\\
&Arnoldi-MIIO&149&11.8896&69&78.86\%
\\
&Arnoldi-IIO&328&18.1942&155&\\
&IIO&2968&56.2305&806&\\
&MIIO&2105&31.5038&211&43.97\%
\\
\midrule
0.995&Arnoldi-Inout&818&33.6962&400&\\
&GArnoldi-MIIO&1068&43.7403&151&37.33\%
\\
&GArnoldi-MPIO&1236&53.7787&169&\\
&Arnoldi-MIIO&178&13.9216&87&80.05\%\\
&Arnoldi-IIO&411&22.3158&201&
\\
&IIO&3901&69.7926&1093&\\
&MIIO&2753&41.1136&286&41.09\%
\\
\midrule
0.998&Arnoldi-Inout&1427&62.8251&705&\\
&GArnoldi-MIIO&2764&107.9686&361&26.94\%
\\
&GArnoldi-MPIO&3020&116.4598&391&\\
&Arnoldi-MIIO&287&24.8432&151&83.19\%
\\
&Arnoldi-IIO&827&45.8834&410&\\
&IIO&8263&147.772&2482&\\
&MIIO&5785&88.1767&647&40.33\%
\\
\toprule
\end{tabular}
\end{table}
\FloatBarrier

\begin{table}[htbp]
\renewcommand{\arraystretch}{1.4} % 调整为1.5倍行高 
\caption{Numerical results of the seven methods for the web-Google matrix}\label{tab4}
\begin{tabular}{@{}clcccc@{}}
\toprule
$\alpha$	&Method &	Mv&	CPU&	IT&Speedup\\
\midrule
0.99&Arnoldi-Inout&926&76.0203&443&\\
&GArnoldi-MIIO&608&55.3425&93&53.14\%\\
&GArnoldi-MPIO&78&8.2639&20&\\
&Arnoldi-MIIO&109&22.9364&50&80.58\%\\
&Arnoldi-IIO&190&25.4218&89&\\
&IIO&2553&118.0946&520&\\
&MIIO&1576&68.3435&143&42.13\%\\
\midrule
0.993&Arnoldi-Inout&1269&103.7151&603&\\
&GArnoldi-MIIO&872&74.0435&122&52.77\%\\
&GArnoldi-MPIO&1166&95.534&155&\\
&Arnoldi-MIIO&132&28.2278&60&82.00\%\\
&Arnoldi-IIO&228&31.4195&110&\\
&IIO&3329&156.7788&690&\\
&MIIO&2053&90.1562&190&42.49\%\\
\midrule
0.995&Arnoldi-Inout&1703&143.9108&813&\\
&GArnoldi-MIIO&1424&118.7686&189&42.28\%\\
&GArnoldi-MPIO&1542&125.1591&201&\\
&Arnoldi-MIIO&182&39.4337&82&80.83\%\\
&Arnoldi-IIO&318&44.6923&156&\\
&IIO&4261&205.755&901&\\
&MIIO&2629&117.7246&247&42.78\%\\
\midrule
0.998&Arnoldi-Inout&3745&317.1602&1181&\\
&GArnoldi-MIIO&2944&240.7143&378&43.63\%\\
&GArnoldi-MPIO&3230&259.973&412&\\
&Arnoldi-MIIO&325&75.2257&157&82.38\%\\
&Arnoldi-IIO&512&74.6105&264&\\
&IIO&8457&427.0189&1888&\\
&MIIO&5221&248.8986&517&41.71\%\\
\toprule
\end{tabular}
\end{table}
\FloatBarrier

%%%%%%%%%%%%%%%%%%%%%%%%%%%%%%%%%%%Conclusions%%%%%%%%%%%%%%%%%%%%%%%%%%%%%%%%%%%%%%%%%%%%%%%%
%%%%%%%%%%%%%%%%%%%%%%%%%%%%%%%%%%%Conclusions%%%%%%%%%%%%%%%%%%%%%%%%%%%%%%%%%%%%%%%%%%%%%%%%

\section{Conclusions}
\(\quad\)\ In this paper, we have presented two approaches for accelerating the computation of the PageRank. These two approaches are referred to as Arnoldi-MIIO and GArnoldi-MIIO in this paper. The Arnoldi-MIIO is introduced by combining a new multi-step splitting iteration approach with the thick restarted Arnoldi algorithm, and the GArnoldi-MIIO is introduced by combining a new multi-step splitting iteration approach with the generalized Arnoldi algorithm. 

Our numerical results show the effectiveness of these new approaches, particularly for damping factors close to 1. While we provide reasonable parameter choices to achieve satisfactory results, determining the optimal parameters remains challenging. In the future, we would like to discuss the choice of experimental parameters, e.g., optimal combination of parameters $m_1$ and $m_2$ so that the new approaches can work more efficiently, the optimal choice of the weighted matrix $\widetilde{G}$ is also required to be further analyzed. In addition, our future research will focus on the theory of the Arnoldi process and the convergence of the Arnoldi-type algorithm is still required to be further analyzed. Moreover, the proposed approaches can be extended to compute the other more general Markov chain \cite{jieshao2}.

\bigskip
\bigskip

%%%%%%%%%%%%%%%%%%%%%%%%%%%%%%%%%%%Declarations%%%%%%%%%%%%%%%%%%%%%%%%%%%%%%%%%%%%%%%%%%%%%%%%
%%%%%%%%%%%%%%%%%%%%%%%%%%%%%%%%%%%Declarations%%%%%%%%%%%%%%%%%%%%%%%%%%%%%%%%%%%%%%%%%%%%%%%%

\section*{Funding}
\(\quad\)\ This work was supported by National Natural Science Foundation of China (grant numbers 12001363, 72171170). Cultivation Foundation of School of Economics and Management, Tongji University (grant number 1200128001). 

\bigskip
\bigskip
\section*{Conflict of interest statement}
\(\quad\)\ The authors declare no potential conflict of interests.

\bigskip
\bigskip
\section*{Data availability}
\(\quad\)\ The data and the code used during this study will be shared on reasonable request.
\bigskip
\bigskip

\bibliography{sn-bibliography}% common bib file

\end{document}